\newtheorem{theorem}{Theorem}[section]
\newtheorem{lemma}{Lemma}[section]
\newtheorem{notation}{Notation}[section]
\title{ Transcendental lattices of certain singular $K3$ surfaces}
\begin{document}
\author{M.J.Bertin and  O.Lecacheux}
\keywords{Modular Surfaces, Niemeier lattices, Elliptic fibrations of $K3$ surfaces, Transcendental lattices}                                                                       
\curraddr{Sorbonne Universit\'e \\Institut de Math\'ematiques de Jussieu-Paris Rive Gauche \\ Case 247\\ 4 Place Jussieu,\
 75252 PARIS, Cedex 85, France}                                               

\email{marie-jose.bertin@imj-prg.fr} 

\email{odile.lecacheux@imj-prg.fr}

\date{\today}
\maketitle

\begin{abstract}

We compute the transcendental lattices of the singular $K3$ surfaces belonging to three pencils of $K3$ surfaces, namely the Ap\' ery-Fermi pencil with transcendental lattice $U \oplus \langle 12 \rangle$, the Verrill's pencil with transcendental lattice $U \oplus \langle 6 \rangle$ and another pencil linked to Verrill's pencil with transcendental lattice $U \oplus \langle 24 \rangle $. Many corollaries are deduced. For example, some singular $K3$ surfaces belong to different pencils or are Kummer surfaces of $K3$ surfaces of another pencil.

  \end{abstract}
\section{Introduction}
The first motivation to consider the three $K3$ families $(Y_k)$, $(Z_k)$ and $(X_k)$ was the computation of the Mahler measures of their defining polynomials respectively $P_k$, $Q_k$ and $Q'_k$. In a series of papers Bertin tried to generalize some formulae valid for a certain type of two variables polynomials defining elliptic curves, that is their Mahler measures are expressed in terms of a $L$-series of a modular form which is also the $L$-series of the elliptic curve itself.

For some values of $k$, the $K3$ surfaces $Y_k$, $Z_k$ and $X_k$ are singular (i.e. their Picard number is $20$) and the $L$-series of their transcendental lattices are given by the $L$-series of a modular form suspected to be the same as in the expression of the Mahler measure.

The family $(P_k)$  defining $(Y_k)$ with generic Picard number $19$ is the well known Ap\'ery-Fermi pencil defined by

\[P_k= x+\frac{1}{x}+ y+\frac{1}{y}+z+\frac{1}{z}-k. \]
It was studied first by Peters and Stienstra \cite{PS}, then used by Bertin \cite{Be1} \cite{Be2} and others \cite{Sa} to find explicit Mahler measures of some members and finally studied by Bertin and Lecacheux \cite{BL}.
In particuliar Peters and Stienstra \cite{PS} gave their generic transcendental lattice $U\oplus \langle 12 \rangle$.

The family $(Q_k)$ defining $(Z_k)$ is defined by 
\[Q_k=(x+y+z+1)(xy+xz+yz+xyz)-(k+4)xyz.\]
It has been studied concerning the Mahler measure of some members of the family, namely those defining singular $K3$ surfaces, by Bertin \cite{Be1} \cite{Be2}, Samart \cite{Sa} but the link between the Mahler measure and the $L$-series of the $K3$ surface was only considered by Bertin \cite{Be2} and her co-authors \cite{BFFLM}.

The family $(Q'_k)$  defining $(X_k)
$ with generic Picard number $19$ is Verrill's family \cite{Ve2} \cite{Ve3} with generic transcendental lattice $U\oplus \langle 6 \rangle$ and defined by
\[Q'_k=(X+XY+XYZ+1)(1+Z+ZY+ZXY)-(k+4)XYZ.\] 


The Mahler measure of $Q_k$ is in fact the same as the Mahler measure of $Q'_k$ since the change variables $x=X$, $y=XY$, $z=XYZ$ transforms $Q_k$ into $Q'_k$ and thus gets the Mahler measure inchanged.
Hence we can deduce that the generic Picard number of $Z_k$ is $19$ and that $Z_k$ and $X_k$ are singular $K3$ surfaces (i.e. with Picard number $20$) for the same values of $k$. These values and those corresponding to singular $K3$ surfaces $Y_k$ have been computed long ago by Boyd \cite{Bo}.

It leads also to our first question: is there a link between the two families $(Z_k)$ and $(X_k)$? That is the object of section $3$ where we prove the following theorem.
\begin{theorem}
  1) The transcendental lattice of the generic member $Z_k$ is $U\oplus \langle 24 \rangle $.

  2) There is a genus $1$ fibration of $Z_k$ whose Jacobian surface $J_k$ is a $K3$ surface of the Verrill's family with generic transcendental lattice $U\oplus \langle 6 \rangle$.

\end{theorem}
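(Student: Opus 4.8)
The plan is to equip the generic member $Z_k$ with two genus one fibrations of different natures: an elliptic fibration \emph{with} a section, from which one reads off $\mathrm{NS}(Z_k)$ and hence the transcendental lattice in (1), and a genus one fibration \emph{without} a section, whose relative Jacobian produces the surface $J_k$ of (2).

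For (1), I would start from the polynomial $Q_k$ and take the pencil on $Z_k$ obtained by fixing a well chosen monomial in the coordinates (equivalently, a pencil of hyperplane sections of the singular model of $Z_k$). Resolving the base locus and the surface singularities presents $Z_k$ as an elliptic surface $Z_k\to\mathbb{P}^1$; the reducible fibres and their Kodaira types come from the special members of the pencil together with the resolution data, and give the root part of the trivial lattice $\mathrm{Triv}=U\oplus\bigoplus_i L_i$. Exhibiting enough explicit sections (from curves lying on the model) determines the Mordell--Weil group, so that $\mathrm{NS}(Z_k)=\mathrm{Triv}+\mathrm{MW}$ is entirely known; for generic $k$ it has rank $19$, which re-proves that the generic Picard number is $19$. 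Since the fibration has a section, $\mathrm{NS}(Z_k)$ contains the unimodular lattice $U$ (fibre class and section), hence splits as $U\oplus W$ with $W$ negative definite and computable from the fibre components and the height pairing on $\mathrm{MW}$. The transcendental lattice $T_{Z_k}$ is then the rank $3$, signature $(2,1)$ orthogonal complement of $\mathrm{NS}(Z_k)$ in $\Lambda_{K3}=U^3\oplus E_8(-1)^2$; its discriminant form is the opposite of that of $W$, and computing the latter, together with Nikulin's classification of indefinite even lattices by signature and discriminant form, identifies $T_{Z_k}$ with $U\oplus\langle 24\rangle$.

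For (2), I would use the monomial relation between $Q_k$ and $Q'_k$ to exhibit on $Z_k$ a genus one fibration which carries no section but has index $2$ (the value $2$ being forced by the compatibility $24=6\cdot 2^{2}$ below, and to be confirmed directly by producing a bisection). Passing to the relative Jacobian gives an elliptic K3 surface $J_k$ with a section; I would compute its Weierstrass equation from the genus one fibration and match it, after an explicit change of coordinates, with the Weierstrass model of the members of Verrill's pencil $(X_k)$. Should a direct equality be awkward, $J_k$ can instead be located inside that pencil by computing its configuration of singular fibres, its Mordell--Weil group, and its transcendental lattice, which must then be $U\oplus\langle 6\rangle$. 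The relation $24=6\cdot 2^{2}$ between the two transcendental lattices is exactly the one expected between a genus one fibration of index $2$ and its Jacobian, and it furnishes an independent check of (1).

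The step I expect to be the real obstacle is the identification in (2): proving that $J_k$ is genuinely a member of Verrill's pencil, and not merely a K3 surface sharing its generic transcendental lattice. This needs either the Weierstrass-equation comparison carried out in full -- a genus one to Jacobian computation that is heavy but mechanical -- or a rigidity argument using the whole N\'eron--Severi data of the fibration rather than $T_{J_k}$ alone. A lesser point is to check, in (1), that the Mordell--Weil rank of the chosen elliptic fibration equals $1$ for \emph{generic} $k$, so that $\rho(Z_k)=19$ generically and not only for sporadic $k$; this is settled by a specialization argument together with the explicit sections.
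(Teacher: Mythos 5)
Your strategy matches the paper's proof: the authors exhibit an explicit generic rank-$0$ elliptic fibration $(F_k)$ of $Z_k$ (elliptic parameter $t=x+y+z$ followed by a second parameter change) with a $2$-torsion section and singular fibers $III^*, I_8, I_3, I_2, 2I_1$, write down the Gram matrix of $\mathrm{NS}(Z_k)$, compute its discriminant form via Shimada's lemma, and identify $T(Z_k)=U\oplus\langle 24\rangle$ by matching discriminant forms exactly as you propose; for (2) they likewise take a genus-$1$ quartic fibration in a new parameter $m$, pass to its Jacobian $J_k$, and recognize the resulting $A_2E_7E_8$ configuration in Sterk's list of Jacobian fibrations of the $U\oplus\langle 6\rangle$ surface, consistent with your $24=6\cdot 2^2$ check. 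The obstacle you rightly flag in (2) --- distinguishing genuine membership in Verrill's pencil from merely sharing its generic transcendental lattice --- is in fact handled no more explicitly in the paper than in your sketch, the argument there resting only on the fiber-type identification.
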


We are also interested in another question: among the singular members of these families, which ones belong to at least two families?

This question was suggested by an observation concerning the Mahler measures of their corresponding polynomials. In return, the answer to the question may give an expression of the Mahler measure in terms of the $L$-series of the $K3$ surface.

So all along sections $4$ to $6$ we shall prove the following theorem which completes some partial results already known. For simplification we adopt Shimada and Zhang's notation \cite{SZ}, that is $[a \quad b \quad c]$ means the transcendental lattice $\begin{pmatrix}
  a & b \\
  b & c
\end{pmatrix}$.

\begin{theorem}
 The transcendental lattices of the singular members of the previous families are given in the following table. In case we could not compute the transcendental lattice, we give the conjectural
discriminant of the $K3$ surface obtained from results on Mahler's measure of the defining polynomial
of the surface.
\begin{center} 
 \begin{tabular}{|l|l|l|l|l|l|}
 \hline
 $Y_0$ & $[4 \quad 2 \quad 4]$ & $Z_{-36}$ &$?12a^2$  & $X_{-36}$ & $[2 \quad 0 \quad 6 ]$\\
 
 $Y_2$ & $[2 \quad 0 \quad 4]$ & $Z_{-12}$ & $[10 \quad 2 \quad 10]$ & $X_{-12}$ & $[4 \quad 0 \quad 6 ]$\\
 
  $Y_3$ & $[2 \quad 1 \quad 8]$ & $Z_{-6}$ & $[8 \quad 4 \quad 8]$ & $X_{-6}$ & $[6 \quad 0 \quad 8 ]$\\
 
 $Y_6$ & $[2 \quad 0 \quad 12]$ & $Z_{-3}$ & $[4 \quad 1 \quad 4]$ & $X_{-3}$ & $[6 \quad 0 \quad 10 ]$\\  

 $Y_{10}$ & $[6 \quad 0 \quad 12]$ & $Z_{0}$ &$[2 \quad 0 \quad 6]$  & $X_{0}$ & $[2 \quad 0 \quad 6 ]$\\ 
 
   $Y_{18}$ & $[10 \quad 0 \quad 12]$ & $Z_{4}$ & $[2 \quad 0 \quad 16]$ & $X_{4}$ & $[2 \quad 0 \quad 4 ]$\\
  
   $Y_{102}$ & $[12 \quad 0 \quad 26]$ & $Z_{12}$ & $[2 \quad 0 \quad 24]$  & $X_{12}$ & $[2 \quad 1 \quad 2 ]$\\
   
    $Y_{198}$ & $[12 \quad 0 \quad 34]$ & $Z_{60}$ &$?15a^2$  & $X_{60}$ & $[4 \quad 1 \quad 4 ]$\\
    
     $Y_{k^2=20}$ & $[2 \quad 0 \quad 10]$ & &  &  & \\
     
 $Y_{k^2=54}$ & $[4 \quad 0 \quad 12]$ & &  &  & \\
 
 $Y_{k^2=-12}$ & $[6 \quad 0 \quad 6]$ & &  &  & \\

   $Y_{k^2=-45}$ & $[8 \quad 2 \quad 8]$ & &  &  & \\
 \hline

\end{tabular}
\end{center}

\end{theorem}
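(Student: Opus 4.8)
The plan is to establish each line of the table by the same recipe, applied to whichever of the three families the surface belongs to: produce an explicit elliptic fibration on the given member, read off its singular fibres and Mordell--Weil group, and from this data reconstruct the N\'eron--Severi lattice and then the transcendental lattice $T(X)$. For a given $k$ I would start from a fibration carried by the defining polynomial --- setting a suitable monomial ratio ($z$ in $P_k$; one of $X$, $XY$, $X/Z$ in $Q_k$, $Q'_k$) equal to a parameter $t$ presents $Y_k$, $Z_k$ or $X_k$ as a one-parameter family of genus-$1$ curves; a Weierstrass model together with Tate's algorithm gives the Kodaira types of the reducible fibres. When no section is available (in particular for the genus-$1$ fibrations on $Z_k$) I would replace the surface by its Jacobian as in part 2) of Theorem 1.1. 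The generic members are already understood: Peters--Stienstra \cite{PS} and Verrill, together with \cite{BL} for $(Y_k)$, give the generic transcendental lattices $U\oplus\langle 12\rangle$ and $U\oplus\langle 6\rangle$, and part 1) of Theorem 1.1 gives $U\oplus\langle 24\rangle$ for $Z_k$.

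For the special values of $k$ in the table --- which are exactly Boyd's values \cite{Bo} --- the Picard number rises from $19$ to $20$, so an extra reducible fibre degenerates or a new section splits off. I would exhibit this extra class explicitly and verify, via the Shioda--Tate formula $\rho(X)=2+\sum_v(m_v-1)+\operatorname{rank}\mathrm{MW}(X)=20$, that the fibration carries the full Picard lattice. From the list of fibres, the torsion of $\mathrm{MW}(X)$ and the height pairing on the Mordell--Weil lattice one then computes $\operatorname{disc}\mathrm{NS}(X)$ and, more importantly, the discriminant form $q_{\mathrm{NS}(X)}$. Since $T(X)$ is the orthogonal complement of $\mathrm{NS}(X)$ in $L_{K3}=U^3\oplus E_8(-1)^2$, one gets $q_{T(X)}=-q_{\mathrm{NS}(X)}$ and $|\operatorname{disc}T(X)|=|\operatorname{disc}\mathrm{NS}(X)|$; in particular $T(X)$ is the asserted positive-definite binary form $[a\ b\ c]$ up to its genus.

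The genus usually contains more than one class, and resolving this ambiguity is the crux of the proof. I would use three handles: (i) the pencil itself, since specialization embeds $T(X)$ into $T_{\mathrm{gen}}=U\oplus\langle 2n\rangle$ (with $2n=12,6,24$) as the primitive closure of the orthogonal complement of the extra class $w$, whose square is known from the previous step, and this strongly constrains $b$ and $c$; (ii) a second elliptic fibration on the same $X$, obtained via the Kneser--Nishiyama embedding of a rank-$6$ negative-definite lattice with discriminant form $q_{T(X)}$ into the $24$ Niemeier lattices, whose frame lattice imposes further constraints to be intersected with the first; and (iii) the weight-$3$ newform attached to $X$, whose level is dictated by the Mahler-measure identities of \cite{Bo}, \cite{Be1}, \cite{Be2}. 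Cross-checking the three columns against each other --- coincidences such as $T(X_{-36})=T(X_0)=T(Z_0)=[2\ 0\ 6]$, or $[2\ 0\ 12]=[1\ 0\ 6](2)$ for $Y_6$ --- provides internal consistency and in fact yields the corollaries about common membership and Kummer structures announced in the abstract.

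For $Z_{-36}$ and $Z_{60}$ I do not expect the fibration analysis to close, because the extra algebraic class is not visible rationally on any fibration one can write down; there I would only record the discriminant $12a^2$, resp. $15a^2$, predicted by the Mahler-measure computation, leaving the precise form $[a\ b\ c]$ conjectural (hence the ``?'' entries). The principal obstacle throughout is therefore twofold: producing enough explicit cycles --- sections and fibre components --- on each special member to force $\rho=20$ in the first place, and then separating the finitely many lattices lying in one and the same genus, for which the pencil embedding and the modular/Mahler-measure input are indispensable.
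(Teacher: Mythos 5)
Your recipe for the $Z_k$ column is essentially the paper's: explicit Weierstrass models derived from $Q_k$, Tate's algorithm, Mordell--Weil generators, the Gram matrix of $NS$, and then the discriminant form via Shimada's lemma (Lemma \ref{lem:2.1}), with Shimada--Zhang invoked for the extremal rank-$0$ fibrations and the Jacobian trick of Theorem 1.1(2) when no section exists. For the $Y_k$ and $X_k$ columns, however, the paper does something quite different and much more rigid than what you propose, and the difference is exactly at the point you yourself call the crux. You plan to recover only the genus of $T$ from $q_{NS}$ and then separate classes using (i) the square of the extra algebraic class $w$, (ii) Kneser--Nishiyama frames, and (iii) the level of the attached newform. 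None of these closes the gap: (ii) and (iii) are genus-level (indeed essentially discriminant-level) invariants, and (i) is insufficient because a primitive vector in $U\oplus\langle 2n\rangle$ is not determined up to isometry by its square alone (Eichler's criterion needs two hyperbolic planes), so $w^{\perp}$ is not pinned down by $w^2$. Moreover the table genuinely contains several non-isomorphic forms of equal discriminant (e.g.\ $[2\ 0\ 24]$, $[4\ 0\ 12]$, $[6\ 0\ 8]$, $[8\ 4\ 8]$, all of determinant $48$), so discriminant-level data cannot suffice.

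The missing idea is the modular parametrization itself. For $(Y_k)$ and $(X_k)$ the value $k$ is an explicit eta-quotient in $\tau$, Boyd's table gives the integral quadratic equation $A\tau^2+B\tau+C=0$ satisfied by each CM point, and the Picard--Fuchs solutions $\{G,\tau G,\tau^2 G\}$ express the period as $\omega(\tau)=G\gamma_1^*+\tau G\gamma_2^*-3\tau^2G\gamma_3^*$ in the fixed basis of $T_{\mathrm{gen}}=U\oplus\langle 6\rangle$ (resp.\ with $-6\tau^2$ for $U\oplus\langle12\rangle$). The condition $\int_{p\gamma_1+q\gamma_2+r\gamma_3}\omega=0$ then reads $-3p\tau^2+6q\tau+r=0$ (resp.\ $-6p\tau^2+12q\tau+r=0$), so the quadratic equation of $\tau$ hands you the exact integral vector $w=p\gamma_1+q\gamma_2+r\gamma_3$ that becomes algebraic, not merely its square; $T$ is then computed as the explicit rank-$2$ sublattice $w^{\perp}\subset T_{\mathrm{gen}}$ by integral linear algebra, with no genus ambiguity at all. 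Without this input your argument does not determine the $Y$ and $X$ entries. (A smaller point: even on the $Z$ column, identifying the genus requires the full discriminant form including the cross-pairing of the two cyclic generators --- the paper computes, e.g., $B_4\cdot B_{12}=\tfrac14$ for $Z_{-6}$ --- not just the group orders and generator norms; your write-up should make explicit that this, plus one-class-per-genus for the discriminants that occur, is what finishes those cases.)
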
 
 
The formulation $?12a^2$ (resp. $ ?15a^2 $), $a \in \mathbb Z$, means we suspect only the discriminant of the $K3$
surface with such a value.

The method of computation of the transcendental lattices of singular $K3$ surfaces $Y_k$  and $X_k$ is
similar. It uses the fact that the family is modular, meaning both that $k$ is a modular function and
their Picard-Fuchs equations have modular solutions. Since we ignore such a property for family
$(Z_k)$, we have to deduce the transcendental lattices from the N\'eron-Severi lattices thus require
equations to compute the transcendental lattices of some members.
Finally, in the last section we deduce which $K3$ surfaces belong to several families.
Computations were performed using partly the computer algebra system PARI \cite{PA}, partly Sage
\cite{Sage} and mostly the computer algebra system MAPLE and the Maple Library ``Elliptic Surface
Calculator'' written by Kuwata \cite{Ku}.
The first author is particularly indebted to David Boyd who computed which $k$ correspond to singular $K3$
surfaces and allowed her to perform the first computations of Mahler measures.

Although motivated by quite different aims, Sarti's paper \cite{Sar} must be cited here. In particuliar we notice in her work the two singular $K3$ surfaces with transcendental lattices $[4 \quad 1 \quad 4]$ and $[6 \quad 0 \quad 10]$ which are respectively the surfaces $Z_{-3}$ (or $X_{60}$) and $X_{-3}$ in our work.

We thank Alessandra Sarti for catching our attention on these results.

\section{Background}
For definitions and main properties concerning elliptic K3 surfaces see Sch\"{u}tt and Shioda's paper
\cite{SS}.

\subsection{Computation of  the N\'eron-Severi lattice}
Recall the following Shioda's result \cite{Shio}: 
Let $(S,\Phi, \mathbb P^1)$ be an elliptic surface with a section $\Phi$, without exceptional curves of first kind.

Denote by $NS(S)$ the group of algebraic equivalence classes of divisors of $S$.

Let $u$ be the generic point of $\mathbb P^1$ and $\Phi^{-1}(u)=E$ the elliptic curve defined over $K=\mathbb C(u)$ with a $K$-rational point $o=o(u)$. Then, 
$E(K)$ is an abelian group of finite type provided that $j(E)$ is transcendental over $\mathbb C$.
Let $r$ be the rank of $E(K)$ and $s_1,..., s_r$ be generators of $E(K)$ modulo torsion. Besides, the torsion group  $E(K)_{tors}$ is generated by at most two elements $t_1$ of order $e_1$ and $t_2$ of order $e_2$ such that $1\leq e_2$, $e_2 |e_1$ and $\mid E(K) _{tors}\mid =e_1e_2$.

The group $E(K)$ of $K$-rational points of  $E$ is canonically identified with the group of sections of $S$ over $\mathbb P^1(\mathbb C)$.

For $s\in E(K)$, we denote by $(s)$ the curve image in $S$ of the section corresponding to $ s$.

Let us define
$$D_{\alpha}:=(s_{\alpha})-(o) \,\,\,\,\,\,1\leq\alpha \leq r$$
$$D'_{\beta}:=(t_{\beta})-(o) \,\,\,\,\,\,\beta =1,2.$$                           

Consider now the singular fibers of $S$ over
$\mathbb P^1$. We set
$$\Sigma:=\{v\in \mathbb P^1  / C_v=\Phi^{-1}(v)\,\,\,\, {\hbox {be a singular \
fiber}} \}$$
and for each $v\in \Sigma$, $\Theta_{v,i}$, $0\leq i \leq m_v-1$, the
$m_v$ irreducible components of $C_v$.

Let $\Theta_{v,0}$ be the unique component of $C_v$ passing through $o(v)$.

One gets
$$C_v=\Theta_{v,0} +\sum_{i\geq 1}\mu_{v,i}\Theta_{v,i},\,\,\,\,\,\,\,\mu_{v,i}\
\geq 1.$$
Let $A_v$ be the matrix of order $m_v-1$ whose entry of index
$(i,j)$ is $(\Theta_{v,i}\Theta_{v,j})$, $i,j\geq 1$, where $(DD')$
is the intersection number of the divisors $D$ et $D'$ along
$S$. Finally $f$ will denote a non singular fiber,
i.e. $f=C_{u_0}$ for $u_0\notin \Sigma$.

\begin{theorem}

The N\'eron-Severi group $NS(S)$ of the elliptic surface $S$
is generated by the following divisors
$$f, \Theta_{v,i} \,\,\,\,\,\,(1\leq i \leq                                     
m_v-1,\,\,\,\,v\in\Sigma)$$
$$(o), D_{\alpha}\,\,\,\,\,\,1\leq \alpha \leq r,                               
\,\,\,\,D'_{\beta}\,\,\,\,\beta =1,2.$$

The only relations between these divisors are at most two relations
$$e_{\beta}D'_{\beta}\approx e_{\beta}(D'_{\beta} (o))f+\sum_{v\in
\Sigma
}(\Theta_{v,1},...,\Theta_{v,m_v-1})e_{\beta}A_v^{-1}\left ( \begin{array}{l}
(D'_{\beta}\Theta_{v,1})\\.\\
.\\
.\\
                                                               (D'_{\beta}\Theta_{v,m_v-1}
                                                               
)
\end{array}
\right )
$$
where $\approx$ stands for the algebraic equivalence.

\end{theorem}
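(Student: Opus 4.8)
The plan is to reduce everything to the Shioda--Tate isomorphism together with elementary linear algebra over $\mathbb{Z}$. Write $T\subset NS(S)$ for the \emph{trivial lattice}, the subgroup generated by $(o)$, a general fibre $f$, and the non-identity fibre components $\Theta_{v,i}$ ($v\in\Sigma$, $1\le i\le m_v-1$). First I would check that $T$ is non-degenerate and identify its shape: the classes $(o)$ and $f$ span a rank-two sublattice whose Gram matrix has the form $\left(\begin{smallmatrix}\ast&1\\1&0\end{smallmatrix}\right)$, of determinant $-1$, while for each $v$ the classes $\Theta_{v,1},\dots,\Theta_{v,m_v-1}$ span the fibral root lattice with Gram matrix $A_v$; since $S$ has no $(-1)$-curves the fibres are Kodaira fibres, and the fibral intersection form on all components of $C_v$ is negative semidefinite with radical spanned by the fibre class $\sum_i\mu_{v,i}\Theta_{v,i}$ (with $\mu_{v,0}=1$), so each $A_v$ is \emph{negative definite, hence invertible}. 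These summands are mutually orthogonal ($(o)\cdot\Theta_{v,i}=0$ for $i\ge 1$ since $(o)$ meets only $\Theta_{v,0}$; $f\cdot\Theta_{v,i}=0$ and $\Theta_{v,i}\cdot\Theta_{w,j}=0$ for $v\ne w$ since distinct fibres are disjoint). Consequently $T$ is free of rank $2+\sum_v(m_v-1)$ and every $A_v$ is invertible --- the two facts used below.

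Granting the Shioda--Tate theorem --- restriction to the generic fibre induces an isomorphism $NS(S)/T\xrightarrow{\ \sim\ }E(K)$ carrying the class of $(s)$ to $s$, valid because $j(E)$ transcendental makes $E(K)$ finitely generated --- the list of generators drops out at once. Lifting the generators $s_1,\dots,s_r$ of $E(K)$ modulo torsion and $t_1,t_2$ of $E(K)_{tors}$, the classes $(s_\alpha),(t_\beta)$ together with $T$ generate $NS(S)$; since $(o)\in T$ this is the same as saying that $f$, the $\Theta_{v,i}$, $(o)$, $D_\alpha=(s_\alpha)-(o)$ and $D'_\beta=(t_\beta)-(o)$ generate $NS(S)$.

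For the relations, suppose $\sum_\alpha n_\alpha D_\alpha+\sum_\beta m_\beta D'_\beta+\tau=0$ in $NS(S)$ with $\tau$ in the $\mathbb{Z}$-span of the generators of $T$. Reducing modulo $T$ and applying the isomorphism gives $\sum_\alpha n_\alpha s_\alpha+\sum_\beta m_\beta t_\beta=0$ in $E(K)$; as the $s_\alpha$ map to a basis of $E(K)/\mathrm{tors}$ this forces all $n_\alpha=0$, and then $\sum_\beta m_\beta t_\beta=0$ in $E(K)_{tors}\cong\mathbb{Z}/e_1\mathbb{Z}\times\mathbb{Z}/e_2\mathbb{Z}$, i.e. $e_\beta\mid m_\beta$. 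Writing $m_\beta=e_\beta m'_\beta$ reduces the relation to $\sum_\beta m'_\beta\,(e_\beta D'_\beta)+\tau=0$. Now $e_\beta t_\beta=o$ in $E(K)$ forces $e_\beta(t_\beta)\equiv 0\pmod T$, so $e_\beta D'_\beta\in T$; once it is expressed through the generators of $T$, freeness of $T$ forces $\tau=0$, and what survives is exactly one relation per torsion generator of order $>1$, hence at most two.

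It remains to make $e_\beta D'_\beta=af+b(o)+\sum_{v,i}c_{v,i}\Theta_{v,i}\in T$ explicit, which is a single linear system solved by intersecting with the generators of $T$: pairing with $f$ gives $b=0$ (since $D'_\beta\cdot f=(t_\beta)\cdot f-(o)\cdot f=1-1=0$); pairing with $(o)$ gives $a=e_\beta\,(D'_\beta\cdot(o))$; and pairing with $\Theta_{v,j}$ gives $\sum_i c_{v,i}(A_v)_{ij}=e_\beta\,(D'_\beta\cdot\Theta_{v,j})$, whence $(c_{v,i})_i=e_\beta A_v^{-1}(D'_\beta\cdot\Theta_{v,i})_i$ using that $A_v$ is symmetric and invertible. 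This is precisely the displayed formula. The only substantive input is the Shioda--Tate isomorphism and the identification of $T$ as the orthogonal sum of $U$ (or rather the rank-two unimodular lattice above) and the fibral lattices; everything afterwards is bookkeeping and linear algebra, so I expect the ``hard part'' to be a matter of invoking the correct structural theorem rather than of any new idea.
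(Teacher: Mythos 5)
The paper does not prove this statement at all: it is recalled verbatim from Shioda \cite{Shio} as background, so there is no internal proof to compare against. Your argument is the standard derivation and is essentially correct: the identification of the trivial lattice $T$ as a nondegenerate orthogonal sum (the unimodular rank-two piece spanned by $(o)$ and $f$ plus the negative definite fibral lattices $A_v$, whence the generators of $T$ are independent and each $A_v$ is invertible), the reduction of the generation statement to lifting generators of $E(K)$, and the computation of the coefficients of $e_\beta D'_\beta\in T$ by pairing against $f$, $(o)$ and $\Theta_{v,j}$ all reproduce the classical proof. Two caveats. First, granting the Shioda--Tate isomorphism $NS(S)/T\simeq E(K)$ concedes most of the substance of the theorem: that isomorphism already encodes both the generation statement and the fact that the kernel of restriction to the generic fibre is exactly $T$, and establishing it (decomposition into horizontal and vertical divisors, Abel--Jacobi on the generic fibre) is where the real work in \cite{Shio} lies; as a self-contained proof yours is therefore close to circular, though as a reduction it is fine. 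Second, the sentence ``freeness of $T$ forces $\tau=0$'' is not literally what happens: in general $\tau=-\sum_\beta m'_\beta\tau_\beta\neq 0$ where $\tau_\beta\in T$ is the expression of $e_\beta D'_\beta$; the correct conclusion is that after subtracting $\sum_\beta m'_\beta$ times the displayed relations, the residual relation involves only the independent generators of $T$ and is therefore trivial, so the relation module is generated by the at most two displayed relations. This is clearly what you intended, and with that rewording the argument is complete.
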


\subsection{Computation of the transcendental lattice}
We recall some results from Shimada \cite{Shim}.

Let $L$ be an even lattice. The abelian group $G_L:=L^*/L$ is called the discriminant group of $L$. Moreover the symmetric bilinear form on $L$ extends to a $\mathbb Q$-valued symmetric bilinear form on $L^*$. Hence we get on $G_L$ the following quadratic norm $q_L$

\[                                                                                                                                                                                                
\begin{tabular}{lllll}                                                                                                                                                                            
        $q_L:$& $G_L$ & $\rightarrow$& $\mathbb Q/2 \mathbb Z$& \\                                                                                                                                
        & $\bar{x}$ & $\mapsto$ & $x^2$ & (mod. $2\mathbb Z)$.\\                                                                                                                                   
\end{tabular}                                                                                                                                                                                     
\]
The pair $(G_L,q_L)$ is by definition the discriminant form of the lattice $L$.

When the lattice $L$ is given by its Gram matrix, we can compute its discriminant form as explained in Shimada's lemma \cite{Shim}.

\begin{lemma}\label{lem:2.1}
Let $A$ the Gram matrix of $L$ and $U$, $V \in Gl_n(\mathbb Z)$ such that

\[ UAV=D=\begin{pmatrix}                                                                                                                                                                          
        d_1 &  & 0\\                                                                                                                                                                              
         & \ddots & \\                                                                                                                                                                            
        0 &  & d_n\\                                                                                                                                                                              
\end{pmatrix}                                                                                                                                                                                     
\]
with $1=d_1=\ldots =d_k < d_{k+1} \leq \ldots \leq d_n$. Then

\[G_L\simeq \oplus_{i>k} \mathbb Z/(d_i).\]
Moreover the $i$th row vector of $V^{-1}$, regarded as an element of $L^*$ with respect to the dual basis $e_1^*$, ..., $e_n^*$ generate the cyclic group $\mathbb Z/(d_i)$.
\end{lemma}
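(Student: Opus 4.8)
The plan is to reduce the statement to the Smith normal form of the Gram matrix together with careful bookkeeping of the dual bases. Write $e_1,\dots,e_n$ for the basis of $L$ underlying $A=((e_i,e_j))_{i,j}$ and $e_1^*,\dots,e_n^*$ for the dual basis of $L^*$, so $(e_i^*,e_j)=\delta_{ij}$. First I would express the natural inclusion $L\hookrightarrow L^*$ (which exists because the form is $\mathbb Z$-valued on $L$) in these coordinates: since $e_j\in L^*\otimes\mathbb Q$ and $(e_j,e_k)=A_{jk}$, comparing with $(\sum_i c_ie_i^*,e_k)=c_k$ gives $e_j=\sum_i A_{ji}e_i^*$. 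Thus, writing elements of $L^*$ as integer row vectors in the basis $e_i^*$, the sublattice $L\subseteq L^*\cong\mathbb Z^n$ is exactly the row span $\mathbb Z^nA$, and hence
\[
G_L=L^*/L\;\cong\;\mathbb Z^n/\mathbb Z^nA,
\]
the cokernel of right multiplication by $A$, which is finite because $L$ is nondegenerate and so $\det A\neq0$.

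Next I would feed in the hypothesis $UAV=D$. From $A=U^{-1}DV^{-1}$ we get $\mathbb Z^nA=\mathbb Z^nU^{-1}DV^{-1}=\mathbb Z^nDV^{-1}$, since $U^{-1}\in GL_n(\mathbb Z)$ fixes $\mathbb Z^n$. Right multiplication by $V$ is an automorphism of $\mathbb Z^n$ sending $\mathbb Z^nDV^{-1}$ onto $\mathbb Z^nD$, so it induces an isomorphism
\[
\varphi:\;\mathbb Z^n/\mathbb Z^nA\;\xrightarrow{\ \sim\ }\;\mathbb Z^n/\mathbb Z^nD\;=\;\bigoplus_{i=1}^n\mathbb Z/(d_i),\qquad x\mapsto xV.
\]
Because $d_1=\dots=d_k=1$ the first $k$ summands vanish, which gives $G_L\simeq\bigoplus_{i>k}\mathbb Z/(d_i)$, the first assertion.

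For the statement about generators, recall that in $\bigoplus_i\mathbb Z/(d_i)$ the standard basis row vector $f_i$ generates the $i$th cyclic factor. Pulling back along $\varphi$, a preimage of $f_i$ is the row vector $x_i$ with $x_iV=f_i$, i.e. $x_i=f_iV^{-1}$, the $i$th row of $V^{-1}$. Read back in $L^*$ via the dual basis, this is the element $\sum_j(V^{-1})_{ij}e_j^*$, and by construction its class in $G_L$ generates $\mathbb Z/(d_i)$, as claimed.

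The only genuinely delicate point is matching the row/column conventions: the identification $L=\mathbb Z^nA$ above must be the one compatible with the side on which $U$ and $V$ act in $UAV=D$; with the opposite convention one would obtain the $i$th column of $U^{-1}$ instead, so getting this consistency right is the main thing to watch. The remaining points are routine — that $L$ being nondegenerate forces $D$ to have nonzero (hence positive) diagonal entries, so no free summands appear — and everything else is just the structure theorem for finitely generated abelian groups applied to $\operatorname{coker}(A)$.
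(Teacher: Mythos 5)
Your argument is correct: the identification $L^*/L\cong\mathbb Z^n/\mathbb Z^nA$ via the dual basis, the passage to Smith normal form, and the bookkeeping showing that the $i$th row of $V^{-1}$ is a preimage of the standard generator of $\mathbb Z/(d_i)$ are all sound, and your convention is the one consistent with the paper's later computation of norms as $BA^{-1}B^t$ for $B$ a row of $V^{-1}$. The paper itself gives no proof of this lemma (it simply cites Shimada), and your Smith-normal-form argument is the standard one that is intended there.
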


Define $H^2(X):=H^2(X,\mathbb Z)/Tors$, 

\[NS(X):=H^2(X)\cap H^{1,1}(X)\]
\[T(X):=NS(X)^{\perp}.\]
The transcendental lattice $T(X)$ is a birational invariant of algebraic surfaces satisfying the following relation between the discriminant forms of $T(X)$ and $NS(X)$

\[(G_{T(X)},q_{T(X)})\equiv (G_{NS(X)},-q_{NS(X)}).\]

\section{Proof of theorem 1.1}
\begin{notation}
The singular fibers of type $\,I_{n},I_{n}^*,IV^{\ast}, ...$ at $t=t_{1},.,t_{m}$
or at  roots of a polynomial $p(t)$ of degree $m$ are denoted $mI_{n}%
(t_{1},..,t_{m})$ or $mI_{n}(p(t)).$
\end{notation}

\begin{theorem}
  (1) The transcendental lattice of the generic member $Z_k$ is $U \oplus \langle 24 \rangle$.
  
(2) There is a genus $1$ fibration of $Z_k$ whose Jacobian surface $J_k$ is a $K3$ surface of the Verrill's family
with generic transcendental lattice $U \oplus \langle 6\rangle$.
\end{theorem}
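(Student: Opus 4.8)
The plan is to produce an explicit Weierstrass or genus-$1$ model for a well-chosen fibration on $Z_k$, read off its singular fibers and Mordell–Weil rank, and then apply Shioda's theorem (Theorem 2.1 above) together with the discriminant-form relation $(G_{T},q_T)\equiv(G_{NS},-q_{NS})$ to pin down $T(Z_k)$.

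\medskip

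\noindent\textbf{Step 1: choosing a fibration.} First I would exhibit a pencil of curves on the affine surface $Q_k=0$ (for generic $k$) giving an elliptic or genus-$1$ fibration over $\mathbb{P}^1$. The natural candidates come from the monomial symmetries of $Q_k$ or from the change of variables $x=X,\ y=XY,\ z=XYZ$ already used in the introduction, which relates $Q_k$ to $Q'_k$. Using Kuwata's ``Elliptic Surface Calculator'' (cited in the excerpt) I would compute the singular fibers and the torsion, organized with the notation $mI_n(p(t))$ introduced just above. For the generic member the Picard number is $19$, so the trivial lattice coming from the fibers and the zero section plus the Mordell–Weil contribution must have rank $19$; this forces the fiber configuration to have $\mathrm{rank}=19-2-\mathrm{rk}\,MW$, which is a strong consistency check.

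\medskip

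\noindent\textbf{Step 2: the Néron–Severi and transcendental lattices.} With the fibration data in hand, Theorem 2.1 gives generators and the two possible relations for $NS(Z_k)$, hence a Gram matrix. Applying Lemma~\ref{lem:2.1} (Smith normal form) I would compute $(G_{NS},q_{NS})$, and then $(G_T,q_T)=(G_{NS},-q_{NS})$. Since $T(Z_k)$ has rank $3$, signature $(2,1)$, and — by the discriminant computation — discriminant group of a specific small order, I would match it against the short list of rank-$3$ even lattices with that discriminant form; the expectation, guided by the Mahler-measure heuristic and by the analogy with $U\oplus\langle 12\rangle$ for $(Y_k)$ and $U\oplus\langle 6\rangle$ for $(X_k)$, is $U\oplus\langle 24\rangle$. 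The key point making this determination unambiguous is that $U\oplus\langle 2n\rangle$ is characterized among indefinite rank-$3$ even lattices of its signature by its discriminant form $\mathbb{Z}/2n$ with $q=-1/(2n)$ (or $+\cdots$) via Nikulin's theory, so it suffices that the discriminant form come out right.

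\medskip

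\noindent\textbf{Step 3: the Jacobian and Verrill's family.} For part (2), if the fibration chosen in Step 1 is genuinely genus $1$ without a section, I would write down its Jacobian surface $J_k$ by the standard formulas (e.g.\ via the associated Weierstrass model over $\mathbb{C}(t)$), and identify it with a member of Verrill's pencil $(X_k)$ — most cleanly by exhibiting an isomorphism of Weierstrass models after a suitable reparametrization of the base and a Möbius change in the parameter $k$, or by matching singular fibers, torsion, and $j$-invariant as functions of $t$. Because $J_k$ and $Z_k$ share the same transcendental lattice up to the index contributed by the multisection degree, and Verrill's family has generic $T=U\oplus\langle 6\rangle$ while we found $U\oplus\langle 24\rangle$ for $Z_k$, the genus-$1$ fibration must have a multisection of degree $2$ (since $24=2^2\cdot 6$), which is the expected and internally consistent picture. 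I anticipate that the main obstacle is Step 1–Step 3 glued together: finding the \emph{right} genus-$1$ fibration whose Jacobian lands exactly in Verrill's pencil rather than in some isogenous or twisted family, and then verifying the identification of parameters precisely enough (not merely up to the generic lattice) — this is where the bulk of the explicit computation, and the risk of an off-by-a-quadratic-twist error, will lie.
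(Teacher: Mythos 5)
Your overall strategy---an explicit fibration, Shioda--Tate for $NS$, Shimada's Smith-normal-form lemma for the discriminant form, Nikulin uniqueness to recognize $U\oplus\langle 24\rangle$, and a non-Jacobian genus-$1$ fibration whose Jacobian lands in Verrill's pencil---is the paper's strategy. But two points in your plan are genuine gaps rather than deferred computation. You work with a single fibration throughout, yet your hedge in Step 3 (``if the fibration chosen in Step 1 is genuinely genus $1$ without a section'') is incompatible with Step 2: the Shioda--Tate/Gram-matrix computation of $NS$ requires a zero section. Two distinct fibrations are needed, and producing them is the substance of the proof. The paper builds, in two steps (elliptic parameter $t=x+y+z$, then $m=(Y+(k+4)X)/(t+1)^2$), a Jacobian fibration $(F_k)$ of rank $0$ with a $2$-torsion section and fibers $III^*+I_8+I_3+I_2+2I_1$, from which the $19\times 19$ Gram matrix, determinant $24$, and discriminant form $(\mathbb Z/24)\,(25/24)\cong(\mathbb Z/24)\,(1/24)$ follow; it then extracts from $(F_k)$ a different, non-Jacobian genus-$1$ fibration (with a degree-$2$ multisection, consistent with your $24=2^2\cdot 6$ observation) and computes its Jacobian $J_k$ explicitly, finding fibers of type $A_2E_7E_8$.

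The second gap is that your one concrete suggestion for finding the fibration---the monomial substitution $x=X$, $y=XY$, $z=XYZ$---is a trap in this context: it is an automorphism of the torus identifying the affine zero loci of $Q_k$ and $Q'_k$, so it cannot by itself produce the structure that separates $T(Z_k)=U\oplus\langle 24\rangle$ from Verrill's $U\oplus\langle 6\rangle$; pursued naively it suggests the two families coincide, which is exactly what the theorem denies. Finally, a genuine methodological difference: for the identification of $J_k$ with Verrill's family the paper does not match Weierstrass models or $j$-invariants as you propose, but recognizes the $A_2E_7E_8$ configuration as the fibration arising from the primitive embedding of $E_6\oplus A_1$ into the Niemeier lattice $E_8^3$ (using $(E_6)_{E_8}^{\perp}=A_2$, $(A_1)_{E_8}^{\perp}=E_7$), i.e.\ as the second of Sterk's ten Jacobian fibrations on the $U\oplus\langle 6\rangle$ surface. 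This lattice-theoretic identification sidesteps, at the level of the generic member, the ``off-by-a-quadratic-twist'' risk you correctly flag, whereas your model-matching route would have to confront it head on.
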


\begin{proof}
 (1) We consider the family of $K3$ surfaces $(Z_k)$ defined by the polynomials
 \[Q_k = (x + y + z + 1)(xy + xz + yz + xyz)-(k + 4)xyz.\]
 \begin{lemma} \label{lem:3.1}
There is a generic rank $0$  elliptic fibration of the generic member $Z_k$ given by the Weierstrass equation
\[(F_k) \qquad y^2=x^3+((k^2-24)t^2+2(k-2)(k+4)^2t+k(k+4)^3)x^2-16t^4(t+k+3)x\]

with $2$-torsion and singular fibers $III^*$ , $I_8$ , $I_3$ , $I_2$ , $2I_1$ .
\end{lemma}

\begin{proof}
  Taking for elliptic parameter $t=x+y+z$, with the change variables
  \[x=\frac{(Y-X^2+Yt)}{Y(k+3-t)}, \qquad y=\frac{Y+(t+1)X}{X(
      k+3-t)},\]
  it follows the Weierstrass equation
  \[Y^2+(t^2-kt+3)XY-(t+1)^2(kt-(t-1)^2)Y=X^3.\]

  This defines a generic rank $1$ elliptic fibration with $6$-torsion and singular fibers $I_6(-1)$, $2I_3((t-1)^2-kt)$, $2I_2(k+3,0)$, $2I_1((t+3)^2-kt)$, $I_6(\infty)$.

  Now the new elliptic parameter $m=\frac{Y+(k+4)X}{(t+1)^2}$ leads to a rank $0$ elliptic fibration with a $2$-torsion section. First by elimination of $Y$, then with the new variable $W$ defined by $X=(t+1)^2W$, it follows an equation of type $a(W)t^2+b(W)t+c(W)=0$. Completing the square we deduce the equation
  \[Q^2=(W+1)(W(k+4)-m)(W^2(k(k+4)^2-4m)+W(k((k+4)^2-m(k+8))-16m)-m(k^2+4k-4m)).\]

  This is the equation of a quartic with the point $[W=-1,Q=0]$. Using a classical change variables $W=-1+\frac{1}{x}$, $Q=\frac{y}{x^2}$, then $x=\frac{k+4}{m+k+4}+X$ and finally

  \[X=-\frac{U}{4m(k+m+3)(k+m+4)} \qquad y=\frac{Y}{4m(k+m+3)(k+m+4)}\]

  we get
  \[Y^2=U^3+\left ((k^2-24)m^2+2(k-2)(k+4)^2m+k(k+4)^3\right )U^2-16Um^4(m+k+3),\]
which is precisely the equation $(F_k)$.

\end{proof}

Starting with the equation $(F_k)$ and using a Shioda and Inose result \cite{SI}, the discriminant of $Z_k$ is equal to

\[ \Delta = \frac{2 \times 8 \times 3 \times 2}{4}=24.\]

Our aim is to compute the transcendental lattice of the Picard number $19$ surface $Z_k$.

Denote $\theta_i$ $1\leq i \leq 7$, $\eta_i$, $1\leq i \leq 7$, $\alpha_i$, $i=1,2$, $\beta_1$ the generators corresponding respectively to the singular fibers $I_8$, $III^*$, $I_3$, $I_2$ and $s_2$ the $2$-torsion section.

The relation is the following:
\[s_2 \approx \frac{1}{2} \theta_1 +\theta_2+\frac{3}{2}\theta_3+2\theta_4 +\frac{3}{2} \theta_5 +\theta_6+\frac{1}{2} \theta_7+\frac{3}{2}\eta_1+2\eta_2+\frac{5}{2}\eta_3+3\eta_4+2\eta_5+\eta_6+\frac{3}{2}\eta_7+\frac{1}{2}\beta_1   \]

and since $s_2$ cuts $\theta_4$, cuts $\eta_1$, does not cut $\alpha_i, i=1,2$, but cuts $\beta_1$, taking the divisors in the following order $(0)$, $f$, $s_2$, $\theta_i, i=2,..,7$, $\eta_i, i=1,..,7$, $\alpha_i, i=1,2$, $\beta_1$, we get the following Gram matrix

\[ \left (\begin{smallmatrix}
	-2& 1 & 0 & 0 & 0 & 0 & 0 & 0 & 0 & 0 & 0 & 0 & 0 & 0 & 0 & 0 & 0 & 0 & 0\\
	1 & 0 & 1 & 0 & 0 & 0 & 0 & 0 & 0 & 0 & 0 & 0 & 0 & 0 & 0 & 0 & 0 & 0 & 0\\
	0 & 1 & -2 & 0 & 0 & 1 & 0 & 0 & 0 & 1 & 0 & 0 & 0 & 0 & 0 & 0 & 0 & 0 & 1\\
	0& 0 & 0 & -2 & 1 & 0 & 0 & 0 & 0 & 0 & 0 & 0 & 0 & 0 & 0 & 0 & 0 & 0 & 0\\
	0 & 0 & 0 & 1 & -2 & 1 & 0 & 0 & 0 & 0 & 0 & 0 & 0 & 0 & 0 & 0 & 0 & 0 & 0\\
	0& 0 & 1 & 0 & 1 & -2 & 1 & 0 & 0 & 0 & 0 & 0 & 0 & 0 & 0 & 0 & 0 & 0 & 0\\
	0 & 0 & 0 & 0 & 0 & 1 & -2 & 1 & 0 & 0 & 0 & 0 & 0 & 0 & 0 & 0 & 0 & 0 & 0\\
	0& 0 & 0 & 0 & 0 & 0 & 1 & -2 & 1 & 0 & 0 & 0 & 0 & 0 & 0 & 0 & 0 & 0 & 0\\
	0 & 0 & 0 & 0 & 0 & 0 & 0 & 1 & -2 & 0 & 0 & 0 & 0 & 0 & 0 & 1 & 0 & 0 & 0\\
	0 & 0 & 1 & 0 & 0 & 0 & 0 & 0 & 0 & -2 & 1 & 0 & 0 & 0 & 0 & 0 & 0 & 0 & 0\\
	0 & 0 & 0 & 0 & 0 & 0 & 0 & 0 & 0 & 1 & -2 & 1 & 0 & 0 & 0 & 0 & 0 & 0 & 0\\
	0 & 0 & 0 & 0 & 0 & 0 & 0 & 0 & 0 & 0 & 1 & -2 & 1 & 0 & 0 & 0& 0 & 0 & 0\\
	0 & 0 & 0 & 0 & 0 & 0 & 0 & 0 & 0 & 0 & 0 & 1 & -2 & 1 & 0 & 1 & 0 & 0 & 0\\
	0 & 0 & 0 & 0 & 0 & 0 & 0 & 0 & 0 & 0 & 0 & 0 & 1 & -2 & 1 & 0 & 0 & 0 & 0\\
	0 & 0 & 0 & 0 & 0 & 0 & 0 & 0 & 0 & 0 & 0 & 0 & 0 & 1 & -2 & 0 & 0 & 0 & 0\\
	0 & 0 & 0 & 0 & 0 & 0 & 0 & 0 & 0 & 0 & 0 & 0 & 1 & 0 & 0 & -2 & 0 & 0 & 0\\
	0 & 0 & 0 & 0 & 0 & 0& 0 & 0 & 0 & 0 & 0 & 0 & 0 & 0 & 0 & 0 & -2 & 1 & 0\\
	0 & 0 & 0 & 0 & 0 & 0 & 0 & 0 & 0 & 0 & 0 & 0 & 0 & 0 & 0 & 0 & 1 & -2 & 0\\
	0 & 0 & 1 & 0 & 0 & 0 & 0 & 0 & 0 & 0 & 0 & 0 & 0 & 0 & 0 & 0 & 0 & 0 & -2\\
      \end{smallmatrix}
      \right )
      \]

      Its determinant is $24$.

According to lemma \ref{lem:2.1} and taking for $A$ the Gram matrix of the N\'eron-Severi lattice, we find $D=I_{18} \oplus 24$ and the last row of $V^{-1}$ is 
\[B=(-3,0,0,0,0,-3,3,0,0,0,3,-6,3,0,0,0,-1,1,-3).\]
Thus we can compute the norm of $B$ 
\[BA^{-1} B^t=-\frac{745}{24}=-\frac{25}{24} \,\,\text{mod. } 2.\]

We deduce 
\[(G_{NS(X)},q_{NS(X)})=(\mathbb Z/24 \mathbb Z) (-\frac{25}{24})\]
\[(G_{T(X)},q_{T(X)})=(\mathbb Z/24 \mathbb Z) (\frac{25}{24}).\]

With the same method if $T'=U\oplus \langle 24 \rangle $we find 
\[(G_{T'},q_{T'})=(\mathbb Z/24 \mathbb Z) (\frac{1}{24})\]
which means that a generator $\alpha$ of the cyclic group $\mathbb Z/24 \mathbb Z$ has norm $\frac{1}{24}$. Replacing the generator $\alpha $ by $5\alpha$ we find
\[(G_{T'},q_{T'})=(\mathbb Z/24 \mathbb Z) (\frac{25}{24}).\]
Thus $T(X)=T'$.

(2)  We start with the Weierstrass equation of $Z_k$
 \[(F_k) \qquad y^2=x^3+((k^2-24)t^2+2(k-2)(k+4)^2t+k(k+4)^3)x^2-16t^4(t+k+3)x.\]

 With the new parameter $m=\frac{-1}{4\left(  k+4\right)  ^{2}}\frac{x}{\left(                  
t+k+3\right)  }$ and with $y=4W\left(  t+k+3\right)  \left(  k+4\right)  $ we
get%
\begin{align*}                                                                                 
W^{2} &  =\\                                                                                   
&  -4t^{4}m+(k^{2}-24)(k+4)^{2}m^{2}t^{2}+(4m+2(k-2))(k+4)^{4}m^{2}t+(                         
4(k+3)m+k(k+4))(k+4)^{4}m^{2}.%
\end{align*}

By classical trick its Jacobian $J_{k}$ has Weierstrass equation changing $m$
in $t$%

\[                                                                                             
y^{2}=x^{3}+t^{2}(k^{2}-24)x^{2}+16{t}^{3}(4 (k+3 )t-k (4+k ) )x+64t^{5}( (4+k                 
)^{2}{t}^{2}-(3{k}^{2}+24k+40)t+ (4+k ) (3k+4)).                                                
\]

Finally the change $x$ into $x+8t^{2}$ leads to the equation
\[                                                                                             
(J_{k})\qquad y^{2}=x^{3}+t^{2}k^{2}x^{2}+16t^{3}k(k+4)(t-1)x+64t^{5}%
(k+4)^{2}(t-1)^{2},                                                                            
\]
with singular fibers $III^{\ast}(0)$, $I_{3}(1)$, $2I_{1}(432\left(                             
k+4\right)  ^{2}t^{2}+\left(  k^{6}-72(4+k)(k^{3}+6k+24)\right)                                
t+64k^{3}(4+k))$, $II^{\ast}(\infty)$ hence of type $A_{2}E_{7}E_{8}$.

Now all the elliptic fibrations of the $K3$ surface with discriminant $6$ and transcendental lattice
\begin{center}
 $\begin{pmatrix}                                                                                                                                                                                 
        0 & 0 & 1\\                                                                                                                                                                               
        0 & 6 & 0\\                                                                                                                                                                               
        1 & 0 & 0\\                                                                                                                                                 
\end{pmatrix}$
\end{center}
are given by the primitive embeddings of $E_6 \oplus A_1$ into the Niemeier lattices (Sterk \cite{St}). Indeed taking the primitive embedding of $E_6 \oplus A_1$ into the Niemeier lattice $E_8^3$ defined by  the primitive embeddings $E_6 \hookrightarrow E_8$ and $A_1 \hookrightarrow E_8$, we get the above elliptic fibration since $(E_6)_{E_8}^{\perp}=A_2$ and $(A_1)_{E_8}^{\perp}=E_7$ \cite{Ni}. It corresponds to the second fibration among the ten jacobian elliptic fibrations obtained by Sterk \cite{St}.

\end{proof}

\section{Transcendental lattices of singular $K3$-surfaces of Verrill's family $(X_k)$}
The singular $K3$ surfaces of Verrill's family $(X_k)$ are given by the imaginary quadratic $\tau$ \cite{Ve1} such that
\[t=\frac{\eta(3\tau)^4 \eta(12\tau)^8\eta(2\tau)^{12}}{\eta(\tau)^4 \eta(4 \tau)^8\eta(6\tau)^{12}}, \qquad k=-(t+\frac{1}{t})-2.\]
They have been computed by Boyd \cite{Bo} and are given in the following table.
\begin{center}
  \begin{tabular}{|l|l|l|}
    \hline
    k & $\tau$ & Equation of $\tau$  \\ \hline
  $-36$ & $\frac{3+\sqrt{-3}}{3}$ & $3 \tau^2-6\tau+4=0$ \\ \hline
    $-12$ & $\frac{\sqrt{-6}}{6}$ & $6\tau ^2+1=0$ \\ \hline
    $-6$ &  $\frac{\sqrt{-3}}{6}$ & $12\tau^2+1=0$ \\ \hline
    $-3$ & $\frac{3+\sqrt{-15}}{24}$ & $24\tau^2-6\tau+1=0$ \\ \hline
    $0$ & $\frac{3+\sqrt{-3}}{12}$ & $12\tau^2-6\tau+1=0$ \\ \hline
    $4$ & $\frac{-2+\sqrt{-2}}{6}$ & $6\tau^2+4\tau+1=0$ \\ \hline
    $12$ & $\frac{3+\sqrt{-3}}{6}$ & $3\tau^2-3\tau+1=0$ \\ \hline
    $60$ & $\frac{3+\sqrt{-15}}{6}$ & $3\tau^2-3\tau+2=0$ \\ \hline

\end{tabular}
\end{center}

Let us recall how to compute the transcendental lattices of the singular $K3$ surfaces of Verrill's family \cite{Ve3}.

If $Y_{\lambda}$ denotes a $K3$ surface, up to a scalar, there exists a unique $2$-form $\omega_{\lambda} \in H^{2,0}(Y_{\lambda},\mathbb C)$.
If $\{ \gamma_{1,\lambda}, \cdots , \gamma_{22,\lambda} \}$ is a basis of $H_2(Y_{\lambda},\mathbb Z)$ then  $\{ \gamma_{1,\lambda}^*, \cdots , \gamma_{22,\lambda}^* \}$ is the corresponding dual basis in 
 $H^2(Y_{\lambda},\mathbb C)$, dual for the intersection product. That means, if $\gamma \in H_2(Y_{\lambda},\mathbb Z)$, then  $\gamma^* \in H^2(Y_{\lambda},\mathbb Z)$ is defined for every $\alpha\in H_2(Y_{\lambda}, \mathbb C))$ by
\[\int_{\alpha} \gamma^* =(\gamma \cdot \alpha)\]
where $(\gamma \cdot \alpha)$ is the intersection number of the two cycles $\alpha$ and $\gamma$. The intersection product of homological cycles satisfies the Poincar\'e duality for the exterior product  i.e. $\forall \alpha, \gamma \in H_2(Y_{\lambda},\mathbb Z)$,
it follows
\[(\gamma \cdot \alpha )=\int_{Y_{\lambda}}\alpha^* \wedge \gamma^*.\]

Hence the dual basis intersection matrix is also the intersection matrix of the  transcendental cycles lattice $T(Y_{\lambda})=\text{Pic}(Y_{\lambda})^{\perp}$. For Verrill's family, the Gram matrix of its transcendental lattice in the basis  $\{ \gamma_{1}, \gamma_2,\gamma_3 \}$ is given by
\[                                                                                                                                                                                                                 
T= \left(                                                                                                                                                                                                          
\begin{matrix}                                                                                                                                                                                                     
0 & 0 & 1 \\                                                                                                                                                                                                       
0 & 6 & 0 \\                                                                                                                                                                                                      
1 & 0 & 0\\

\end{matrix}                                                                                                                                                                                                       
\right).                                                                                                                                                                                                           
\]

Since the integral of $\omega_{\lambda}$ along any algebraic cycle is zero, it follows from the identification of $H_2(Y_{\lambda},\mathbb Z) \otimes \mathbb C$ with $H^2(Y_{\lambda},\mathbb C)$ that $\omega_{\lambda} \in T(Y_
{\lambda})\otimes \mathbb C$.

Hence $\omega_{\lambda}\in H^{2,0}(Y_{\lambda},\mathbb C)$ can be written as
\[\omega_{\lambda(\tau)}=a(\tau)\gamma_1^*+b(\tau)\gamma_2^*+c(\tau)\gamma_3^*\]
with
\[6 b(\tau)^2+2a(\tau)c(\tau)=0,\]
since $\omega_{\tau}\wedge \omega_{\tau} \in H^{(4,0)}=(0)$.

It follows that
\[\omega(\tau)=G(\tau)\gamma_1^*+\tau G(\tau)\gamma_2^* -3\tau^2 G(\tau)\gamma_3^*\]
belongs to $T(X_{\lambda(\tau)})$ where $\{ G(\tau),\tau G(\tau), \tau^2 G(\tau) \}$ satisfies the Picard-Fuchs differential equation of periods of Verrill's family.

Thus the fact that the $K3$ surface $X_{\lambda}$ has Picard number $20$ is equivalent to the fact that there exists a vector $p\gamma_1+q\gamma_2+r\gamma_3 \in T_{\lambda}$ becoming algebraic, hence satisfying $\int_{p\gamma_1+q\gamma_2+r\gamma_3}\omega(\tau)=0$, i.e.
\[-3p\tau^2+6q\tau+r=0.\]

This leads to the determination of the Gram matrix of the transcendental lattices of the singular $K3$ surfaces of Verrill's family.

\subsection{Surface $X_{-36}$}
The corresponding $\tau$ satisfies $3\tau^2-6\tau+4=0$. Hence $p=-1$, $q=-1$, $r=4$ and the vector $-\gamma_1-\gamma_2+4\gamma_3$ becomes algebraic. It follows
that the vectors $\langle \gamma_1+\gamma_2-2\gamma_3,\gamma_2-6\gamma_3 \rangle $ generate the transcendental lattice and
\[T(X_{-36})= [2 \quad 0 \quad 6 ].\]

\subsection{Surface $X_{-12}$}
The corresponding $\tau$ satisfies $6\tau^2+1=0$. Hence $p=-2$, $q=0$, $r=1$ and the vector $-2\gamma_1+\gamma_3$ becomes algebraic. It follows that the vectors
$\langle 2\gamma_1+\gamma_3, \gamma_2 \rangle$ generate the transcendental lattice and
\[T(X_{-12})=[4 \quad 0 \quad 6 ].\]

\subsection{Surface $X_{-6}$}
The corresponding $\tau$ satisfies $12\tau^2+1=0$. Hence $p=-4$, $q=0$, $r=1$ and the vector $-4\gamma_1+\gamma_3$ becomes algebraic. It follows that the vectors $\langle 4\gamma_1+\gamma_3, \gamma_2 \rangle$ generate the transcendental lattice and
\[T(X_{-6})=[ 6 \quad 0 \quad 8].\]

\subsection{Surface $X_{-3}$}
The corresponding $\tau$ satisfies $24\tau^2+6\tau+1=0$.

Hence $p=-8$, $q=-1$, $r=1$ and the vector $-8\gamma_1-\gamma_2+\gamma_3$ becomes algebraic. It follows that the vectors
$\langle \gamma_2+6\gamma_1, -\gamma_2+2\gamma_1+\gamma_3 \rangle$ generate the transcendental lattice and
\[T(X_{-3})=[6 \quad 0 \quad 10].\]

\subsection{Surface $X_{0}$}
The corresponding $\tau$ satisfies $12\tau^2-6\tau+1=0$.

Hence $p=-4$, $q=-1$, $r=1$ and the vector $-4\gamma_1-\gamma_2+\gamma_3$ becomes algebraic. It follows that the vectors $\langle -2\gamma_1- \gamma_2+\gamma_3, \gamma_2+6\gamma_1 \rangle$ generate the transcendental lattice and
\[T(X_{0})=[2 \quad 0 \quad 6].\]

\subsection{Surface $X_{4}$}
The corresponding $\tau$ satisfies $6\tau^2+4\tau+1=0$.

Hence $p=-6$, $q=2$, $r=3$ and the vector $-6\gamma_1+2\gamma_2+3\gamma_3$ becomes algebraic. It follows that the vectors
$\langle \gamma_1+2\gamma_3, \gamma_2-2\gamma_1+\gamma_3 \rangle$ generate the transcendental lattice and
\[T(X_{4})=[2 \quad 0 \quad 4].\]

\subsection{Surface $X_{12}$}
The corresponding $\tau$ satisfies $3\tau^2-3\tau+1=0$.

Hence $p=-2$, $q=-1$, $r=2$ and the vector $-2\gamma_1-\gamma_2+2\gamma_3$ becomes algebraic. It follows that the vectors
$\langle \gamma_1+\gamma_3, \gamma_2+2\gamma_1-\gamma_3 \rangle$ generate the transcendental lattice and
\[T(X_{12})=[2 \quad 1 \quad 2 ].\]

\subsection{Surface $X_{60}$}
The corresponding $\tau$ satisfies $3\tau^2-3\tau+2=0$.

Hence $p=-2$, $q=-1$, $r=4$ and the vector $-2\gamma_1-\gamma_2+4\gamma_3$ becomes algebraic. It follows that the vectors $\langle \gamma_1+2\gamma_3, \gamma_2+\gamma_1-\gamma_3 \rangle $  generate the transcendental lattice and
\[T(X_{60})=[4 \quad 1 \quad 4].\]

\section{Transcendental lattices of singular $K3$ surfaces of the Ap\'ery-Fermi's family}

The singular $K3$ surfaces of the Ap\'ery-Fermi's family $(Y_k)$ are given by the imaginary quadratic $\tau$ \cite{PS} such that
\[t=\left (\frac{\eta(\tau)\eta(6\tau)}{\eta(2\tau \eta(3\tau)} \right )^6, \qquad k=t+\frac{1}{t}.\]

They have been computed by Boyd \cite{Bo} and are given in the following table
\begin{center}
  \begin{tabular}{|l|l|l|}
    \hline
    k & $\tau$ & Equation of $\tau$  \\ \hline
     $0$ & $\frac{-3+\sqrt{-3}}{6}$ & $3\tau ^2+3\tau+1=0$ \\ \hline
  $2$ & $\frac{-2+\sqrt{-2}}{6}$ & $6 \tau^2+4\tau+1=0$ \\ \hline
     $3$ &  $\frac{-3+\sqrt{-15}}{12}$ & $6\tau^2+3\tau+1=0$ \\ \hline
    $6$ & $\frac{\sqrt{-6}}{6}$ & $6\tau^2+1=0$ \\ \hline
    $10$ & $\frac{\sqrt{-2}}{2}$ & $2\tau^2+1=0$ \\ \hline
    $18$ & $\frac{\sqrt{-30}}{6}$ & $6\tau^2+5=0$ \\ \hline
    
    $102$ & $\frac{\sqrt{-6\times 13}}{6}$  &$6\tau ^2+13=0$ \\ \hline
    $198$ & $\frac{\sqrt{-17 \times 6}}{6}$ & $6\tau^2+17=0$ \\ \hline
    $2\sqrt{5}$ & $\frac{-1+\sqrt{-5}}{6}$ & $6\tau^2 +2\tau +1=0$ \\ \hline
    $3\sqrt{6}$ & $\frac{\sqrt{-3}}{3}$ & $3\tau^2 +1=0$ \\ \hline
    $2\sqrt{-3}$ & $\frac{-1+\sqrt{-1}}{2}$ & $2\tau^2+2\tau +1=0$ \\ \hline
    $3\sqrt{-5}$ & $\frac{-3+\sqrt{-15}}{6}$ & $3\tau^2+3\tau+2=0$ \\ \hline

\end{tabular}
\end{center}

For Ap\'ery-Fermi's family the Gram matrix of its transcendental lattice in the basis $\{ \gamma_1, \gamma_2, \gamma_3 \}$ is given by
\[                                                                                                                                                                                                                 
T= \left(                                                                                                                                                                                                          
\begin{matrix}                                                                                                                                                                                                     
0 & 0 & 1 \\                                                                                                                                                                                                       
0 & 12 & 0 \\                                                                                                                                                                                                      
1 & 0 & 0\\

\end{matrix}                                                                                                                                                                                                       
\right).                                                                                                                                                                                                           
\]

As previously, the fact that $Y_{\lambda}$ has Picard number $20$ is equivalent to the existence of a vector $p\gamma_1+q\gamma_2 +r\gamma_3 \in T_{\lambda}$ becoming algebraic, hence satisfying $\int_{p\gamma_1+q\gamma_2+r\gamma_3}\omega(\tau)=0$, that is
\[-6p\tau^2+12q\tau+r=0.\]
In various papers \cite{Be2} \cite{BFFLM}, it was proved

\[T(Y_0)=[4 \quad 2 \quad 4] \qquad T(Y_2)=[2 \quad 0 \quad 4]\qquad T(Y_6)=[2 \quad 0 \quad 12]\]

\begin{align*}                                                                                                                                                                                    
T(Y_3)=[2\quad 1 \quad 8] \qquad \quad T(Y_{10})=[6 \quad 0& \quad 12] \qquad \quad T(Y_{18})=[10 \quad 0 \quad 12]\\                                                                             
T(Y_{102})=[12 \quad 0 \quad 26]\qquad & \qquad  T(Y_{198})=[12 \quad 0 \quad 34].                                                                                                                
\end{align*}

We achieve with the remaining surfaces of the list.

\subsection{Surface $Y_{2\sqrt{5}}$} The corresponding $\tau$ satisfies $6\tau^2+2 \tau +1=0$. Hence $p=-6$, $q=1$, $r=6$ and the vector $-6\gamma_1 +\gamma_2+6\gamma_3$ becomes algebraic. It follows that the vectors
$\langle \gamma_1+\gamma_3, \gamma_2-\gamma_1+\gamma_3 \rangle$ generate the transcendental lattice and
\[T(Y_{2\sqrt{5}})= [2 \quad 0 \quad 10].\]

\subsection{Surface $Y_{3\sqrt{6}}$} The corresponding $\tau$ satisfies $3\tau^2 +1=0$.
Thus the vector $-\gamma_1 +2\gamma_3$ becomes algebraic. It follows that the vectors
$\langle \gamma_1+2\gamma_3, \gamma_2 \rangle $ generate the transcendental lattice and
\[T(Y_{3\sqrt{6}})=[4 \quad 0 \quad 12].\]

\subsection{Surface $Y_{2\sqrt{-3}}$} The corresponding $\tau$ satisfies $2\tau^2+2\tau +1=0$.
Thus the vector $-2\gamma_1+\gamma_2 +6\gamma_3$ becomes algebraic. It follows that the vectors 
$\langle \gamma_1+3\gamma_3, \gamma_2-\gamma_1+3\gamma_3 \rangle $ generate the transcendental lattice and
\[T(Y_{2\sqrt{-3}})=[6 \quad 0 \quad 6].\]

\subsection{Surface $Y_{3\sqrt{-5}}$} The corresponding $\tau$ satisfies $3\tau^2+3 \tau +2=0$.
Thus the vector $-2\gamma_1 +\gamma_2+8\gamma_3$ becomes algebraic. It follows that the vectors $\langle \gamma_1+4\gamma_3, \gamma_2-\gamma_1+2\gamma_3 \rangle$ generate the transcendental lattice and
\[T(Y_{3\sqrt{-5}})=[8 \quad 2 \quad 8].\]

\section{Transcendental lattices of singular $K3$ surfaces of the $(Q_k)$ family}

Let us recall a Weierstrass equation for the $K3$ surface $Z_k$, see lemma \ref{lem:3.1}.
\[(F_k) \qquad y^2=x^3+((k^2-24)t^2+2(k-2)(k+4)^2t+k(k+4)^3)x^2-16t^4(t+k+3)x.\]

\subsection{$T(Z_{-3})$}  A Wierstrass equation is given by
  \[(F_{-3}) \qquad y^2=x^3-x^2(15t^2+10t+3)-16t^5x\]
  with singular fibers $I_{10}(0)$,$I_3(-1)$, $2I_1(64t^2+33t+9)$, $III^*(\infty)$. The rank is $0$ and by Shimada and Zhang \cite{SZ} we get
  \[T(Z_{-3})=[4 \quad 1 \quad 4].\]

    \subsection{$T(Z_0)$}  A Weierstrass equation is given by
    \[(F_0) \qquad y^2=x^3-x^2(24t^2+64t)-16t^4(t+3)x\]
    with singular fibers $I_4^*(0)$, $I_3(-4)$, $I_2(-3)$, $III^*(\infty)$. The rank is $0$ and by Shimada and Zhang \cite{SZ},
    \[T(Z_0)=[2 \quad 0 \quad 6].\]

\subsection{$T(Z_{12})$}  A Wierstrass equation is given by
  \[(F_{12}) \qquad y^2=x^3+x^2(120t^2+5120t+49152)-16t^4(t+15)x\]
  with singular fibers $I_{8}(0)$, $I_3(-16)$, $2I_2(-15,-96)$, $III^*(\infty)$. The rank is $0$ and by Shimada and Zhang \cite{SZ} it follows
  \[T(Z_{12})=[2 \quad 0 \quad 24].\]

  \bigskip

  In the following specializations of $Z_k$ we order the singular fibers as in the generic case section $3$.

\subsection{$T(Z_{-6})$}

A Weierstrass equation is given by
\[(F_{-6}) \qquad y^2=x^3+x^2(3t^2-16t+12)-xt^4(t-3)\]
with singular fibers $I_8(0)$, $I_3(2)$, $I_2(3)$, $III^*(\infty)$.
The rank is $1$ and the infinite section given by the point $P$ generates the Mordell-Weil group. We obtain 
\[P=[16(t-3),4i(t-3)(t^2-24)] \]
and find this infinite section $P$ does not cut the zero section and cuts only $\eta_1$ and $\beta_1$.
We deduce the Gram matrix of the N\'eron-Severi lattice
\[NS=
  \left (
  \begin{smallmatrix}
    -2& 1 & 0 & 0 & 0 & 0 & 0 & 0 & 0 & 0 & 0 & 0 & 0 & 0 & 0 & 0 & 0 & 0 & 0& 0\\
     1 & 0 & 1 & 0 & 0 & 0 & 0 & 0 & 0 & 0 & 0 & 0 & 0 & 0 & 0 & 0 & 0 & 0 & 0&1\\
     0 & 1 & -2 & 0 & 0 & 1 & 0 & 0 & 0 & 1 & 0 & 0 & 0 & 0 & 0 & 0 & 0 & 0 & 1&0\\
    0& 0 & 0 & -2 & 1 & 0 & 0 & 0 & 0 & 0 & 0 & 0 & 0 & 0 & 0 & 0 & 0 & 0 & 0&0\\
    0 & 0 & 0 & 1 & -2 & 1 & 0 & 0 & 0 & 0 & 0 & 0 & 0 & 0 & 0 & 0 & 0 & 0 & 0&0\\
     0 & 0 & 1 & 0 & 1 & -2 & 1 & 0 & 0 & 0 & 0 & 0 & 0 & 0 & 0 & 0 & 0 & 0 & 0&0\\
	 0 & 0 & 0 & 0 & 0 & 1 & -2 & 1 & 0 & 0 & 0 & 0 & 0 & 0 & 0 & 0 & 0 & 0 & 0&0\\
	0& 0 & 0 & 0 & 0 & 0 & 1 & -2 & 1 & 0 & 0 & 0 & 0 & 0 & 0 & 0 & 0 & 0 & 0 &0\\
	0 & 0 & 0 & 0 & 0 & 0 & 0 & 1 & -2 & 0 & 0 & 0 & 0 & 0 & 0 & 1 & 0 & 0 & 0 & 0\\
	0 & 0 & 1 & 0 & 0 & 0 & 0 & 0 & 0 & -2 & 1 & 0 & 0 & 0 & 0 & 0 & 0 & 0 & 0 & 1\\
	0 & 0 & 0 & 0 & 0 & 0 & 0 & 0 & 0 & 1 & -2 & 1 & 0 & 0 & 0 & 0 & 0 & 0 & 0 & 0\\
	0 & 0 & 0 & 0 & 0 & 0 & 0 & 0 & 0 & 0 & 1 & -2 & 1 & 0 & 0 & 0& 0 & 0 & 0 & 0\\
	0 & 0 & 0 & 0 & 0 & 0 & 0 & 0 & 0 & 0 & 0 & 1 & -2 & 1 & 0 & 1 & 0 & 0 & 0 & 0\\
	0 & 0 & 0 & 0 & 0 & 0 & 0 & 0 & 0 & 0 & 0 & 0 & 1 & -2 & 1 & 0 & 0 & 0 & 0 & 0\\
	0 & 0 & 0 & 0 & 0 & 0 & 0 & 0 & 0 & 0 & 0 & 0 & 0 & 1 & -2 & 0 & 0 & 0 & 0 & 0\\
	0 & 0 & 0 & 0 & 0 & 0 & 0 & 0 & 0 & 0 & 0 & 0 & 1 & 0 & 0 & -2 & 0 & 0 & 0 & 0\\
	0 & 0 & 0 & 0 & 0 & 0& 0 & 0 & 0 & 0 & 0 & 0 & 0 & 0 & 0 & 0 & -2 & 1 & 0 & 1\\
	0 & 0 & 0 & 0 & 0 & 0 & 0 & 0 & 0 & 0 & 0 & 0 & 0 & 0 & 0 & 0 & 1 & -2 & 0 & 0\\
	0 & 0 & 1 & 0 & 0 & 0 & 0 & 0 & 0 & 0 & 0 & 0 & 0 & 0 & 0 & 0 & 0 & 0 & -2 & 0\\
	0& 1 &  0 & 0 & 0 & 0 & 0 & 0 & 0 & 1 & 0 & 0 & 0 & 0 & 0 & 0 & 1 & 0 & 0 & -2\\
        \end{smallmatrix}
\right )
\]

and its determinant $-48$. The elementary divisors are $4$ and $12$. Applying lemma \ref{lem:2.1} we find
\[G_{NS}=\mathbb Z/(4) \oplus \mathbb Z/(12).\]

Moreover the vector
\[B_4=[-3,0,-1,0,0,0,0,0,1,0,0,-3,3,-3,3,0,-3,3,-2,3]\]
with quadratic norm $-\frac{1}{2}$ generates the cyclic group $\mathbb Z/(4)$ and the vector

\[B_{12}=[1,0,0,0,0,0,0,0,0,0,0,1,-1,1,-1,0,1,-1,1,-1]\]
with quadratic norm $-\frac{7}{6}$ generates the cyclic group $\mathbb Z/(12)$. They satisfy also the relation $B_4. B_{12}=\frac{1}{4}$.

We are going to prove that $T(Z_{-6})=[8 \quad 4 \quad 8]$.

Let \[ A=\begin{pmatrix}                                                                                                                                                                          
  8 & 4\\                                                                                                                                                                                         
  4 & 8\\                                                                                                                                                                                         
\end{pmatrix}.
\]
Applying lemma \ref{lem:2.1} we get
 \[ D=\begin{pmatrix}                                                                                                                                                                             
  4 & 0\\                                                                                                                                                                                         
  0 & 12\\                                                                                                                                                                                        
\end{pmatrix} \quad   U=\begin{pmatrix}                                                                                                                                                           
  0 & 1\\                                                                                                                                                                                         
  1 & -2\\                                                                                                                                                                                        
\end{pmatrix} \quad  V=\begin{pmatrix}                                                                                                                                                            
  1 & 2\\                                                                                                                                                                                         
  0 & -1\\                                                                                                                                                                                        
\end{pmatrix}.                                                                                                                                                                                     
\]
We deduce $G_A=\mathbb Z/(4) \oplus \mathbb Z/(12)$ and get a vector $b$ with quadratic norm $1/2$ generating $\mathbb Z/(4)$, a vector $c$ with quadratic norm $1/6$ generating $\mathbb Z/(12)$ such that $b.c=-1/4$.
It follows that the vector $2b+c$ is another generator of the cyclic group $\mathbb Z/(12)$ with quadratic norm $7/6$ and $b.(2b+c)\equiv -1/4  (\text{mod. } \mathbb Z)$, hence

\[ T(Z_{-6})=[8 \quad 4 \quad 8].\]

\subsection{$T(Z_{4})$}
A Weierstrass equation is given by
\[(F_{4})\qquad y^2=x^3+x^2(-8t^2+256t+2048)-16xt^4(t+7)\]
with singular fibers $I_8(0)$, $I_3(-8)$, $I_2(-7)$, $III^*(\infty)$.
The rank is $1$ and the infinite section given by the point $P=[-256(t+7),64(t+7)(t^2-64)]$ generates the Mordell-Weil group. We obtain that this infinite section does not cut the zero section and cuts only $\eta_1$, $\beta_1$ and $\alpha_1$.

We deduce the Gram matrix of the N\'eron-Severi lattice

\[NS= \left (
\begin{smallmatrix}
	-2& 1 & 0 & 0 & 0 & 0 & 0 & 0 & 0 & 0 & 0 & 0 & 0 & 0 & 0 & 0 & 0 & 0 & 0& 0\\
	1 & 0 & 1 & 0 & 0 & 0 & 0 & 0 & 0 & 0 & 0 & 0 & 0 & 0 & 0 & 0 & 0 & 0 & 0&1\\
	0 & 1 & -2 & 0 & 0 & 1 & 0 & 0 & 0 & 1 & 0 & 0 & 0 & 0 & 0 & 0 & 0 & 0 & 1&0\\
	0& 0 & 0 & -2 & 1 & 0 & 0 & 0 & 0 & 0 & 0 & 0 & 0 & 0 & 0 & 0 & 0 & 0 & 0&0\\
	0 & 0 & 0 & 1 & -2 & 1 & 0 & 0 & 0 & 0 & 0 & 0 & 0 & 0 & 0 & 0 & 0 & 0 & 0&0\\
	0& 0 & 1 & 0 & 1 & -2 & 1 & 0 & 0 & 0 & 0 & 0 & 0 & 0 & 0 & 0 & 0 & 0 & 0&0\\
	 0 & 0 & 0 & 0 & 0 & 1 & -2 & 1 & 0 & 0 & 0 & 0 & 0 & 0 & 0 & 0 & 0 & 0 & 0&0\\
	0& 0 & 0 & 0 & 0 & 0 & 1 & -2 & 1 & 0 & 0 & 0 & 0 & 0 & 0 & 0 & 0 & 0 & 0 &0\\
	0 & 0 & 0 & 0 & 0 & 0 & 0 & 1 & -2 & 0 & 0 & 0 & 0 & 0 & 0 & 1 & 0 & 0 & 0 & 0\\
	0 & 0 & 1 & 0 & 0 & 0 & 0 & 0 & 0 & -2 & 1 & 0 & 0 & 0 & 0 & 0 & 0 & 0 & 0 & 1\\
	0 & 0 & 0 & 0 & 0 & 0 & 0 & 0 & 0 & 1 & -2 & 1 & 0 & 0 & 0 & 0 & 0 & 0 & 0 & 0\\
	0 & 0 & 0 & 0 & 0 & 0 & 0 & 0 & 0 & 0 & 1 & -2 & 1 & 0 & 0 & 0& 0 & 0 & 0 & 0\\
	0 & 0 & 0 & 0 & 0 & 0 & 0 & 0 & 0 & 0 & 0 & 1 & -2 & 1 & 0 & 1 & 0 & 0 & 0 & 0\\
	0 & 0 & 0 & 0 & 0 & 0 & 0 & 0 & 0 & 0 & 0 & 0 & 1 & -2 & 1 & 0 & 0 & 0 & 0 & 0\\
	0 & 0 & 0 & 0 & 0 & 0 & 0 & 0 & 0 & 0 & 0 & 0 & 0 & 1 & -2 & 0 & 0 & 0 & 0 & 0\\
	0 & 0 & 0 & 0 & 0 & 0 & 0 & 0 & 0 & 0 & 0 & 0 & 1 & 0 & 0 & -2 & 0 & 0 & 0 & 0\\
	0 & 0 & 0 & 0 & 0 & 0& 0 & 0 & 0 & 0 & 0 & 0 & 0 & 0 & 0 & 0 & -2 & 1 & 0 & 1\\
	0 & 0 & 0 & 0 & 0 & 0 & 0 & 0 & 0 & 0 & 0 & 0 & 0 & 0 & 0 & 0 & 1 & -2 & 0 & 0\\
	0 & 0 & 1 & 0 & 0 & 0 & 0 & 0 & 0 & 0 & 0 & 0 & 0 & 0 & 0 & 0 & 0 & 0 &
        -2 & 1\\
	0& 1 &  0 & 0 & 0 & 0 & 0 & 0 & 0 & 1 & 0 & 0 & 0 & 0 & 0 & 0 & 1 & 0 & 1 & -2
      \end{smallmatrix}
    \right )
    \]

and its determinant $-32$. The elementary divisors are $2$ and $16$.  Applying lemma \ref{lem:2.1} we find
\[G_{NS}=\mathbb Z/(2) \oplus \mathbb Z/(16).\]
Moreover the vector
\[B_2=[0,0,-1,0,0,0,1,-1,1,0,0,0,0,0,0,0,-1,1,0,1]\]
with quadratic norm $-\frac{1}{2}$ generates the cyclic group $\mathbb Z/(2)$ and the vector
\[B_{16}=[-1,0,-1,0,0,0,-1,1,0,0,0,0,0,0,0,0,0,1,1,-2]\]
with quadratic norm $-\frac{1}{16}$ generates the cyclic group $\mathbb Z/(16)$.
They satisfy also the relation $B_2 . B_{16}=\frac{1}{2}$. Now the vector $B_2-B_{16}$ is another generator of the cyclic group $\mathbb Z/(16)$ since its quadratic norm is equal to $-\frac{25}{16}$ and it follows $B_2 . (B_2 -B_{16})=-1 \equiv 0 (\text{mod.} \mathbb Z)$.

Hence
\[T(Z_4)=[2\quad  0 \quad 16].\]

\subsection{$T(Z_{-12})$}
Its Weierstrass equation is given by

\[(F_{-12}) \quad y^2=x^3+(30t^2-448t+1536)x^2-t^4(t-9)x\]

with 2-torsion and singular fibers  $I_8(0)$, $I_3(8)$, $I_2(9)$ $III^*(\infty)$. Its rank is $1$ and
\[P=[-\frac{t^4(t+72)^2}{1728(t-24)^2}, \frac{\sqrt{-3}t^4(t+72)}{124416(t-24)^3}(t^4+144t^3-20736t^2+138240t+995328)]\]
of infinite order generates the Mordell-Weil group. Moreover this infinite section $(P)$ cuts the $2$-torsion section $s_2$, the zero section and $\theta_4$ and we deduce the Gram matrix of the N\'eron-Severi lattice of $Z_{-12}$.

\[NS= \left (
\begin{smallmatrix}
        -2& 1 & 0 & 0 & 0 & 0 & 0 & 0 & 0 & 0 & 0 & 0 & 0 & 0 & 0 & 0 & 0 & 0 & 0 & 1\\
        1 & 0 & 1 & 0 & 0 & 0 & 0 & 0 & 0 & 0 & 0 & 0 & 0 & 0 & 0 & 0 & 0 & 0 & 0 & 1\\
        0 & 1 & -2 & 0 & 0 & 1 & 0 & 0 & 0 & 1 & 0 & 0 & 0 & 0 & 0 & 0 & 0 & 0 & 1 & 1\\
        0& 0 & 0 & -2 & 1 & 0 & 0 & 0 & 0 & 0 & 0 & 0 & 0 & 0 & 0 & 0 & 0 & 0 & 0 & 0\\
        0 & 0 & 0 & 1 & -2 & 1 & 0 & 0 & 0 & 0 & 0 & 0 & 0 & 0 & 0 & 0 & 0 & 0 & 0 & 0\\
        0& 0 & 1 & 0 & 1 & -2 & 1 & 0 & 0 & 0 & 0 & 0 & 0 & 0 & 0 & 0 & 0 & 0 & 0 & 1\\
        0 & 0 & 0 & 0 & 0 & 1 & -2 & 1 & 0 & 0 & 0 & 0 & 0 & 0 & 0 & 0 & 0 & 0 & 0 & 0\\
        0& 0 & 0 & 0 & 0 & 0 & 1 & -2 & 1 & 0 & 0 & 0 & 0 & 0 & 0 & 0 & 0 & 0 & 0 & 0\\
        0 & 0 & 0 & 0 & 0 & 0 & 0 & 1 & -2 & 0 & 0 & 0 & 0 & 0 & 0 & 0 & 0 & 0 & 0 & 0\\
        0 & 0 & 1 & 0 & 0 & 0 & 0 & 0 & 0 & -2 & 1 & 0 & 0 & 0 & 0 & 0 & 0 & 0 & 0 & 0\\
        0 & 0 & 0 & 0 & 0 & 0 & 0 & 0 & 0 & 1 & -2 & 1 & 0 & 0 & 0 & 0 & 0 & 0 & 0 & 0\\
        0 & 0 & 0 & 0 & 0 & 0 & 0 & 0 & 0 & 0 & 1 & -2 & 1 & 0 & 0 & 0& 0 & 0 & 0 & 0\\
        0 & 0 & 0 & 0 & 0 & 0 & 0 & 0 & 0 & 0 & 0 & 1 & -2 & 1 & 0 & 1 & 0 & 0 & 0 & 0\\
        0 & 0 & 0 & 0 & 0 & 0 & 0 & 0 & 0 & 0 & 0 & 0 & 1 & -2 & 1 & 0 & 0 & 0 & 0 & 0\\
        0 & 0 & 0 & 0 & 0 & 0 & 0 & 0 & 0 & 0 & 0 & 0 & 0 & 1 & -2 & 0 & 0 & 0 & 0 & 0\\
        0 & 0 & 0 & 0 & 0 & 0 & 0 & 0 & 0 & 0 & 0 & 0 & 1 & 0 & 0 & -2 & 0 & 0 & 0 & 0\\
        0 & 0 & 0 & 0 & 0 & 0& 0 & 0 & 0 & 0 & 0 & 0 & 0 & 0 & 0 & 0 & -2 & 1 & 0 & 0\\
        0 & 0 & 0 & 0 & 0 & 0 & 0 & 0 & 0 & 0 & 0 & 0 & 0 & 0 & 0 & 0 & 1 & -2 & 0 & 0\\
        0 & 0 & 1 & 0 & 0 & 0 & 0 & 0 & 0 & 0 & 0 & 0 & 0 & 0 & 0 & 0 & 0 & 0 & -2 & 0\\
        1 & 1 & 1 & 0 & 0 & 1 & 0 & 0 & 0 & 0 & 0 & 0 & 0 & 0 & 0 & 0 & 0 & 0 & 0 & -2\\
      \end{smallmatrix}
    \right )
    \]

The determinant is $-96$. Applying lemma \ref{lem:2.1} we find
\[G_L\simeq \mathbb Z/(2) \oplus \mathbb Z/(48).\]
Moreover the vector
\[B_{2}=[2,-9,11,0,0,-7,-2,0,0,-11,11,-9,-2,2,9,0,3,-3,0,-2]\]
with quadratic norm $-\frac{1}{2}$ generates the cyclic group $\mathbb Z/(2)$ and the vector

\[B_{48}=[1,-3,4,0,0,-2,-1,0,0,-4,4,-3,-1,1,3,0,1,-1,0,-1]\]

with quadratic norm $-\frac{29}{48}$ generates the cyclic group $\mathbb Z/(48)$. They satisfy also the relation

\[B_2.B_{48}=-171/2 \equiv -1/2 (\text{mod.} \mathbb Z).\]
Let \[ A=\begin{pmatrix}                                                                                                                                                                          
  10 & 2\\                                                                                                                                                                                        
  2 & 10\\                                                                                                                                                                                        
\end{pmatrix}\]
with Smith form $[2 \quad 0 \quad 48]$. Applying lemma \ref{lem:2.1} we find $G_A \simeq \mathbb Z/(2) \oplus \mathbb Z/(48)$,  a vector $u$ with quadratic norm $1/2$ generating the cyclic group $\mathbb Z/(2)$, a vector $v$ with quadratic norm $5/48$ generating the cyclic group $\mathbb Z/(48)$ and satisfying $u.v=-1/2$. Since the vector $5v$ with quadratic norm norm $29/48$ is another generator of the cyclic group $\mathbb Z/(48)$  satisfying $u.5v=-5/2 \equiv 1/2 (\text{mod.} \mathbb Z)$ we deduce

\[ T(Z_{-12})=[10 \quad 2 \quad 10].\]

\section{Jacobian varieties of the singular $K3$ surfaces $Z_k$}
Denote $J_k$ the Jacobian variety of $Z_k$. Sometimes it is easy to get the transcendental lattice of $J_k$ from a Weierstrass equation. For the remaining cases we use the following Keum's results \cite{K} Lemma $2.1$ and Corollary $2.2$. More precisely, if $S$ (resp. $J(S)$) denotes the $K3$ surface (resp. its Jacobian surface) we use the formula
\[\det(Pic(S))=l^2 \det(Pic(J(S))\]
and its corollary: if $\det(Pic(S))$ is square free, then every elliptic fibration on $S$ is a Jacobian fibration i.e. has a section.

By specialization of the Weierstrass equation of $J_k$ we get
\[ (J_{12}) \qquad y^2=x^3+144t^2x^2+3072t^3(t-1)x+16384t^5(t-1)^2\]
with singular fibers $III^*(0)$, $I_3(1)$, $I_2(-1)$, $II^*(\infty)$. The rank is $0$ and by Shimada and Zhang \cite{SZ}, it follows
\[T(J_{12})=[2 \quad 0 \quad 6].\]
Also
\[(J_{-6}) \qquad y^2=x^3+36t^2x^2+192t^3(t-1)x+256t^5(t-1)^2\]
with singular fibers $III^*(0)$, $I_3(1)$, $I_2(-4)$, $II^*(\infty)$. The rank is $0$ and by Shimada and Zhang \cite{SZ}, it follows
\[T(J_{-6})=[2 \quad 0 \quad 6].\]
Moreover
\[(J_4) \qquad y^2=x^3+16t^2x^2+512t^3(t-1)x+4096t^5(t-1)^2\]
with singular fibers $III^*(0)$, $I_4(1)$, $I_1(\frac{32}{27})$, $II^*(\infty)$. The rank is $0$ and by Shimada and Zhang \cite{SZ}, it follows
\[T(J_{4})=[2 \quad 0 \quad 4].\]
We get also
\[(J_0) \qquad y^2=x^3+1024t^5(t-1)^2\]
with singular fibers $II^*(0)$, $IV(1)$, $II^*(\infty)$. The rank is $0$ and by Shimada and Zhang \cite{SZ} or by Utsumi \cite{U}, it follows
\[T(J_{0})=[2 \quad 1 \quad 2].\]

Since $T(Z_{-3})=[4 \quad 1 \quad 4]$, it follows $\det(Pic(Z_{-3}))=15$ is square free and every elliptic fibration on $Z_{-3}$ has a section. Hence
\[Z_{-3}=J_{-3}.\]
Indeed the genus $1$ fibration of $Z_{-3}$ leading to $J_{-3}$

\[W^2=-4mt^4-15m^2t^2+m^2(4m-10)t-3m^2\]
has the section $(t=0,y=m\sqrt{-3})$.

Since $\det(Pic(Z_{-12}))=-24\times 4$, according to the above formula, either $\det(Pic(J_{-12}))=-24 \times 4$ (impossible according all the possible discriminants of singular $K3$ surfaces of Verrill's family), or $\det(Pic(J_{-12}))=-24$ and
\[T(J_{-12})=[4 \quad 0 \quad 6].\]
Finally we conjecture from the fact that the discriminant of $J_{60}$ must be divisible by $5$
\[ T(J_{-36}) \overset{?}{=}[6 \quad 0 \quad 8] \qquad T(J_{60}) \overset{?}{=}[6 \quad 0 \quad 10].\]

\section{Final remarks}
We can now complete the table
\begin{center} 
 \begin{tabular}{|l|l|l|l|l|l|l|l|}
 \hline
 $Y_0$ & $[4 \quad 2 \quad 4]$ & $Z_{-36}$ &$?12a^2$  & $X_{-36}$ & $[2 \quad 0 \quad 6 ]$& $J_{-36}$ & $?[6 \quad 0 \quad 8]$\\
 
 $Y_2$ & $[2 \quad 0 \quad 4]$ & $Z_{-12}$ & $[10 \quad 2 \quad 10]$ & $X_{-12}$ & $[4 \quad 0 \quad 6 ]$ & $J_{-12}$ & $[4 \quad 0 \quad 6]$\\
 
  $Y_3$ & $[2 \quad 1 \quad 8]$ & $Z_{-6}$ & $[8 \quad 4 \quad 8]$ & $X_{-6}$ & $[6 \quad 0 \quad 8 ]$&$J_{-6}$ & $[2 \quad 0 \quad 6]$\\
 
 $Y_6$ & $[2 \quad 0 \quad 12]$ & $Z_{-3}$ & $[4 \quad 1 \quad 4]$ & $X_{-3}$ & $[6 \quad 0 \quad 10 ]$ & $J_{-3}$ & $[4 \quad 1 \quad 4]$\\  

 $Y_{10}$ & $[6 \quad 0 \quad 12]$ & $Z_{0}$ &$[2 \quad 0 \quad 6]$  & $X_{0}$ & $[2 \quad 0 \quad 6 ]$ & $J_0$ & $[2 \quad 1 \quad 2]$\\ 
 
   $Y_{18}$ & $[10 \quad 0 \quad 12]$ & $Z_{4}$ & $[2 \quad 0 \quad 16]$ & $X_{4}$ & $[2 \quad 0 \quad 4 ]$ & $J_4$ & $[2 \quad 0 \quad 4]$\\
  
   $Y_{102}$ & $[12 \quad 0 \quad 26]$ & $Z_{12}$ & $[2 \quad 0 \quad 24]$  & $X_{12}$ & $[2 \quad 1 \quad 2 ]$ & $J_{12}$ & $[2 \quad 0 \quad 6]$\\
   
    $Y_{198}$ & $[12 \quad 0 \quad 34]$ & $Z_{60}$ &$?15a^2$  & $X_{60}$ & $[4 \quad 1 \quad 4 ]$& $J_{60}$ & $?[6 \quad 0 \quad 10]$\\
    
     $Y_{k^2=20}$ & $[2 \quad 0 \quad 10]$ & &  &  &  & &\\
     
 $Y_{k^2=54}$ & $[4 \quad 0 \quad 12]$ & &  &  & & &\\
 
 $Y_{k^2=-12}$ & $[6 \quad 0 \quad 6]$ & &  &  & & & \\
 
 $Y_{k^2=-45}$ & $[8 \quad 2 \quad 8]$ & &  &  & & &\\
 \hline

\end{tabular}
\end{center}

\begin{itemize}
\item The Kummer surface $K_k$ associated to $Y_k$ has discriminant $24\times 4$ and we found in \cite{BL} that its Jacobian $K3$ surface has transcendental lattice $\langle -2 \rangle \oplus \langle 6 \rangle \oplus \langle 2 \rangle$ of discriminant $24$.
\item The $K3$ surface $Z_k$ with discriminant $24$ has for Jacobian variety $X_k$ with discriminant $6$.

  These two results are illustrations of the previous Keum's result \cite{K}.

\item Using Keum's corollary \cite{K}, since the discriminant of $X_k$ is $6$ without square factors, we can deduce that every elliptic fibration on $X_k$ has a section.
\item We notice also that $Y_{k^2=-45}$ is the Kummer surface of $Z_{-3}$ and that $Y_{k^2=54}$ is the Kummer surface of $X_0$ or of $X_{-36}$.
\item The $K3$ surface $Z_{-6}$ is the Kummer surface of $Y_0$ which is in turn the Kummer surface of $J_0$. Also $Y_0$ is the Kummer surface of $X_{12}$.
  
\item The $K3$ surface $Z_0$ is equal to $X_{-36}$ and $Y_2$ is equal to $X_4$.

\end{itemize}

Some of these remarks are useful to compute the $L$-series of the $K3$ surface.


\begin{thebibliography}{25}
\bibitem{Be1} M.-J. Bertin,\emph{ Mesure de Mahler d'hypersurfaces K3}, J. of Number Theory (2008), Vol. 128, Issue 11, 2890-2913.
  ArXivmath.NT/0501153.
  
\bibitem{Be2} M.-J. Bertin,\emph{ Mahler's measure and L-series of K3 hypersurfaces}, AMS/IP Studies in Advanced Mathematics
  38 (2006), 3-18.
  
\bibitem{BFFLM} M.-J. Bertin, A. Feaver, J. Fuselier, M. Lalin, M. Manes, \emph{Mahler measure of some singular K3-surfaces}, Women
in Numbers 2: research directions in number theory, 149-169. Contemp. Math. 606, Amer. Math. Soc. Providence
RI2013.
ArXiv 1208.6240, 1-17.

\bibitem{BL} M.-J. Bertin \& O. Lecacheux, \emph{Ap\'ery-Fermi pencil of K3-surfaces and 2-isogenies}, Journal of the Math. Soc. of
  Japan, 72(2), (2020), 599-637.
  
\bibitem{Bo} D. Boyd, \emph{Personal communication.}
  
\bibitem{K} J. Keum, \emph{A note on elliptic K3 surfaces}, Transactions of the American Mathematical Society, Vol. 352, Number
  5, 2077-2086.
  
\bibitem{Ku} M. Kuwata, Maple Library Elliptic Surface Calculator,
  http://c-faculty.chuo-u.ac.jp/~ kuwata/ESC.php.
  

  
  
\bibitem{Ni} K. Nishiyama, \emph{The Jacobian fibrations on some K3 surfaces and their Mordell-Weil groups}, Japan. J. Math.,
  Vol 22, No.2,(1996), 293-347.
  
\bibitem{PA} The PARI Group, Bordeaux GP/PARI version 2.7.3, 2015, http://pari.math.u-bordeaux.fr.
  
\bibitem{PS} C. Peters, J. Stienstra, \emph{A pencil of K3-surfaces related to Ap\'ery's recurrence for $\zeta(3)$ and Fermi surfaces for
potential zero}, in Arithmetic of Complex Manifolds, Proc. Erlangen 1988 Lecture Notes Math. 1399, Springer
Verlag, Ed. Barth, Lange.

\bibitem{Sage} SageMath - Open-Source Mathematical Software System,
  https://www.sagemath.org
  
\bibitem{Sa} D. Samart, \emph{Mahler measures as linear Combinations of L-values of Multiple Modular Forms}, Canad. J. Math.
Vol. 67 (2) (2015), 424-449.
ArXiv 2013.

\bibitem{Sar} A. Sarti, \emph{Transcendental Lattices of some $K3$ surfaces}, Math. Nachr. 281, N°7 (2008), 1-17.
  

\bibitem{SS} M. Sch\"{u}tt \& T. Shioda, \emph{Elliptic surfaces}, Algebraic geometry in East Asia - Seoul 2008, Adv. Stud. Pure Math.
  60 (2010), 51-160.
  
\bibitem{Shim} I. Shimada, \emph{K3 surfaces and Lattice Theory}, Talk Dec. 2, 2014, Hano\"
  {i} Vietnam Institute for Advavanced Study
  in Mathematics, (Slides on Shimada's web page).
  
\bibitem{SZ} I. Shimada \& D. Q. Zhang, \emph{Classification of extremal elliptic K3 surfaces and fundamental groups of open
    K3 surfaces}, Nagoya Math. J. 161 (2001), 23-54.
  
\bibitem{Shio} T. Shioda, \emph{On elliptic modular surfaces}, J. Math. Soc. Japan 24 (1972), 20-59.
  
\bibitem{SI} T. Shioda, H.Inose, \emph{On singular K3 surfaces}, in: Baily, W. L. Jr., Shioda, T. (eds), Complex analysis and
  algebraic geometry, Iwanami Shoten, Tokyo (1977), 119-136.
  
\bibitem{St} H. Sterk, \emph{Lattices and K3 surfaces of degree 6}, Linear Alg. and its Applications, Vol. 226-228 (1995), 297-309.
  
\bibitem{U} K. Utsumi, \emph{Jacobian fibrations on the singular K3 surface of discriminant 3}, arXiv:1405.3577v1
  [math.AG](2014), 1-13.
  
\bibitem{Ve1} H. Verrill, \emph{The L-series of Certain Rigid Calabi-Yau Threefolds}, Journ. of Numb. Theor. 81, (2000), 310-334.
  
  
\bibitem{Ve2} H. Verrill, \emph{Root Lattices and pencils of varieties}, J. Math. Kyoto Univ. (JMKYAZ) 36-2 (1996), 423-446.
  
\bibitem{Ve3} H. Verrill, \emph{Root Lattices and Pencils of Varieties}, PhD. Cambridge University (1994).
\end{thebibliography}
\end{document}